\newtheorem{theo}{Theorem}
\newtheorem{theorem}{Theorem}[section]
\newtheorem*{theorem*}{Theorem}
\newtheorem{lemma}[theorem]{Lemma}
\newtheorem*{coro*}{Corollary}
\title{\phantom{-} \ General $L_p$ affine isoperimetric inequalities}
\author{Christoph Haberl and Franz E.\ Schuster}
\begin{document}

\renewcommand{\citeleft}{{\rm [}}
\renewcommand{\citeright}{{\rm ]}}
\renewcommand{\citepunct}{{\rm,\ }}
\renewcommand{\citemid}{{\rm,\ }}


\maketitle

\vspace{1cm}

\section{Introduction}
Projection bodies were introduced by Minkowski at the turn of the
previous century and have since become a central notion in convex
geometry. They arise naturally in a number of different areas
such as functional analysis, stochastic \linebreak geometry and
geometric tomography, see e.g.,
\textbf{\cite{bourgain:lindenstrauss, gardner2ed, grinberg:zhang,
Ludwig:projection, Schneider:Weil83, steineder08, Thompson}}.
\linebreak The fundamental affine isoperimetric inequality for
projection bodies is the {\it Petty projection inequality}
\textbf{\cite{petty67}}: Among all convex bodies of given volume,
the ones whose polar projection bodies have maximal volume are
precisely the ellipsoids. This inequality turned out to be far
stronger than the classical \linebreak isoperimetric inequality.
Lutwak, Yang, and Zhang \textbf{\cite{LYZ2000}} (see also Campi
and Gronchi \textbf{\cite{Campi:Gronchi02a}}) established an
important $L_p$ Petty projection inequality for the (symmetric)
$L_p$ analogue of the projection operator. This extension is the
geometric core of a sharp affine $L_p$ Sobolev inequality which
is significantly stronger than the classical $L_p$ Sobolev
inequality, see \textbf{\cite{LYZ2002, Zhang99}}. Recent advances
\linebreak in valuation theory by Ludwig
\textbf{\cite{Ludwig:Minkowski}} revealed that the $L_p$
projection operator used in \textbf{\cite{LYZ2000}} is only one
representative of an entire class of $L_p$ extensions of the
classical projection operator. In this article we establish the
$L_p$ Petty \linebreak projection inequality for each member of
the family of $L_p$ projection operators. \linebreak It is shown
that each of these new inequalities strengthens and implies the
\linebreak previously known $L_p$ Petty projection inequality.
Moreover, the two strongest inequalities are identified. Similar
results for the $L_p$ Busemann--Petty centroid inequality are
also established.

The celebrated {\em Blaschke--Santal\'{o} inequality} is by far
the best known affine isoperimetric inequality (see e.g.,
\textbf{\cite{gardner2ed, Gruber:CDG, Schneider:CB}}): The product
of the volumes of polar reciprocal convex bodies is maximized
precisely by ellipsoids. Lutwak and Zhang \textbf{\cite{LZ1997}}
obtained an important $L_p$ version of the Blaschke--Santal\'{o}
inequality. Their inequality includes as a limiting case the
classical inequality for origin-symmetric convex bodies. For
convex bodies which are not origin-symmetric this $L_p$ extension
yields an inequality which is weaker than the
Blaschke--Santal\'{o} inequality. As an application of our work,
we establish the correct $L_p$ analog of the
Blaschke--Santal\'{o} inequality, one that includes as a limiting
case the classical inequality for all convex bodies.

\pagebreak

For a convex body $K$ (i.e., a nonempty, compact convex subset of
$\mathbb{R}^n$) denote \linebreak by $h(K,x)=\max \{x \cdot y: y
\in K\}$, for $x \in \mathbb{R}^n$, the support function of $K$.
The {\it projection body} $\Pi K$ of $K$ is the convex body whose
support function in the \linebreak direction $u$ is equal to the
$(n-1)$-dimensional volume of the projection of $K$ onto the
hyperplane orthogonal to $u$. An important recent result by Ludwig
\textbf{\cite{Ludwig:Minkowski}} has demonstrated the special
place of projection bodies in the affine theory \linebreak of
convex bodies: The projection operator was characterized as the
unique Minkowski valuation which is contravariant with respect to
nondegenerate \linebreak linear transformations.

A function $\Phi$ defined on a subset $\mathcal{L}$ of the set of
convex bodies $\mathcal{K}^n$ and taking values in an abelian
semigroup is called a {\it valuation} if
\begin{equation} \label{defval}
\Phi(K \cup L) + \Phi(K \cap L) = \Phi K + \Phi L,
\end{equation}
whenever $K, L, K \cap L, K \cup L \in \mathcal{L}$. The theory
of real valued valuations lies \nolinebreak at the core of
geometry. They were the critical ingredient in Dehn's solution of
Hilbert's third problem. For information on the classical theory
of valuations, see \textbf{\cite{Klain:Rota}} and
\textbf{\cite{McMullen93}}. For some of the more recent results,
see \textbf{\cite{Alesker99, Alesker01, Alesker03, Bernig03,
Ludwig:projection, Ludwig:matrix, Ludwig:Minkowski, Ludwig06,
LR99, centro}}.

First results on convex body valued valuations were obtained by
S\-chneider \textbf{\cite{Schneider74}} in the 1970s, where the
addition of convex bodies in (\ref{defval}) is Minkowski addition
defined by $h(K+L,\cdot)=h(K,\cdot) + h(L,\cdot)$, see also
\textbf{\cite{Kiderlen, schneider:schuster:projection, Schu06a}}.
In recent years the investigations of these {\em Minkowski
valuations} gained momentum through a series of articles by
Ludwig \textbf{\cite{Ludwig:projection, Ludwig:Minkowski}}. She
obtained complete classifications of Minkowski valuations
compatible with nondegenerate linear transformations (see Section
3 for precise definitions).

Projection bodies are part of the classical Brunn--Minkowski
theory which is the result of joining the notion of volume with
the usual vector addition of convex sets. The books by Gardner
\textbf{\cite{gardner2ed}}, Gruber \textbf{\cite{Gruber:CDG}} and
Schneider \textbf{\cite{Schneider:CB}} form an excellent
introduction to the subject. In a series of articles
\textbf{\cite{Lutwak93b, Lutwak96}}, Lutwak showed that merging
the notion of volume with the $L_p$ Minkowski addition of convex
sets, introduced by Firey, leads to a Brunn--Minkowski
\nolinebreak theory for each $p \geq 1$. Since Lutwak's seminal
work, the topic has been the focus of intense study, see e.g.,
\textbf{\cite{Chou:Wang:06, Gardner99, Gardner:Zhang,
Ludwig:Minkowski, centro, Lutwak:Oliker, LYZ2000, LYZ2006,
LYZ2002, LYZ2004, LZ1997, Ryabogin:Zvavitch, schuett:werner:04,
Stancu02, Stancu03}}.

For $p > 1$, Ludwig \textbf{\cite{Ludwig:Minkowski}} introduced a
two-parameter family of convex bodies,
\begin{equation} \label{compipp}
c_1 \cdot \Pi_p^+K +_p c_2 \cdot \Pi_p^-K, \qquad K \in
\mathcal{K}^n_{\mathrm{o}},
\end{equation}
and established the $L_p$ analogue of her classification of the
projection operator: \linebreak She showed that the convex bodies
defined in ({\ref{compipp}) constitute all of the $L_p$
extensions of projection bodies. Here,
$\mathcal{K}^n_{\mathrm{o}}$ is the set of convex bodies which
contain the origin in their interiors and $c_1, c_2\geq0$ (not
both zero). The convex body \linebreak defined by (\ref{compipp})
is an $L_p$ Minkowski combination of the {\em nonsymmetric $L_p$
\linebreak projection bodies} $\Pi_p^{\pm}K$ (see Sections 2 and 3
for definitions).

The (symmetric) {\it $L_p$ projection body $\Pi_p K$} of $K \in
\mathcal{K}^n_{\mathrm{o}}$, first defined in
\textbf{\cite{LYZ2000}}, \nolinebreak is
\[\Pi_pK=\mbox{$\frac{1}{2}$} \cdot \Pi_p^+K+_p \mbox{$\frac{1}{2}$} \cdot  \Pi_p^-K.  \]
As our main result we extend the $L_p$ Petty projection
inequality for $\Pi_p$ by Lutwak, Yang, and Zhang to the entire
class (\ref{compipp}) of $L_p$ projection bodies.

Let $K^*=\{x \in \mathbb{R}^n: x \cdot y \leq 1 \mbox{ for all } y
\in K \}$ denote the polar body of $K \in
\mathcal{K}_{\mathrm{o}}^n$. We use $V(K)$ to denote the volume
of $K$ and we write $B$ for the Euclidean unit ball. If $\Phi:
\mathcal{K}_{\mathrm{o}}^n \rightarrow
\mathcal{K}_{\mathrm{o}}^n$, we use $\Phi^*K$ to denote $(\Phi
K)^*$.

\begin{theo}\label{main1} Let $K \in \mathcal{K}^n_{\mathrm{o}}$ and $p > 1$. If $\Phi_p K$ is the convex body defined \nolinebreak by
\[\Phi_p K =  c_1\cdot \Pi_p^+K+_pc_2\cdot \Pi_p^-K, \]
where $c_1, c_2 \geq 0$ are not both zero, then
\[V(K)^{n/p-1}V(\Phi_p^* K)\leq V(B)^{n/p-1}V(\Phi_p^* B),\] with
equality if and only if $K$ is an ellipsoid centered at the
origin.
\end{theo}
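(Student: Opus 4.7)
The plan is to prove the inequality by Steiner symmetrization, exploiting an affine invariance of the functional to promote the equality case from balls to ellipsoids. The first step is to verify that
\[
K \;\longmapsto\; V(K)^{n/p-1}\, V(\Phi_p^* K)
\]
is invariant under the special linear group. This follows from the $\mathrm{GL}(n)$-contravariance of the nonsymmetric $L_p$ projection operators $\Pi_p^{\pm}$ combined with the $L_p$ combination identity $h(c_1 \cdot K +_p c_2 \cdot L,\cdot)^p = c_1 h(K,\cdot)^p + c_2 h(L,\cdot)^p$, which together determine the exact transformation of $\Phi_p$, and hence of $\Phi_p^*$, under volume-preserving linear maps. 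As a consequence it suffices to compare $V(\Phi_p^* K)$ with $V(\Phi_p^* rB)$ for a ball $rB$ having the same volume as $K$, and the equality case once identified as balls can be promoted to all origin-centered ellipsoids at the end.

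The heart of the proof is a Steiner symmetrization inequality: for every unit vector $u \in S^{n-1}$,
\[
V(\Phi_p^* S_u K) \;\geq\; V(\Phi_p^* K),
\]
where $S_u K$ denotes the Steiner symmetral of $K$ about $u^\perp$. Writing $R_u$ for the reflection in $u^\perp$ and introducing the \emph{conjugate} functional $\bar{\Phi}_p K := c_2 \cdot \Pi_p^+K +_p c_1 \cdot \Pi_p^- K$, the identity $\Pi_p^+(R_u K) = R_u\, \Pi_p^- K$ yields $V(\Phi_p^*(R_u K)) = V(\bar{\Phi}_p^* K)$, and the $R_u$-symmetry of $S_u K$ forces $V(\Phi_p^* S_u K) = V(\bar{\Phi}_p^* S_u K)$. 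I would aim for a set-theoretic inclusion
\[
\Phi_p^* S_u K \;\supseteq\; T_u(\Phi_p^* K),
\]
where $T_u$ is a volume-preserving symmetrization designed to match the asymmetry of $\Phi_p$; via the $L_p$ surface area definition of $\Pi_p^{\pm}$ this reduces to a pointwise inequality of radial functions, in the spirit of the Lutwak--Yang--Zhang argument for the symmetric operator. As an alternative, a shadow system approach embeds $(R_u K, K)$ in a family $\{K_t\}_{t \in [0,1]}$ with $K_{1/2} = S_u K$, and the polar volume representation $V(\Phi_p^* K_t) = \tfrac{1}{n}\int_{S^{n-1}} h(\Phi_p K_t, v)^{-n}\, dv$ reduces the desired inequality to convexity of $t \mapsto V(\Phi_p^* K_t)^{-1}$; pairing this with the analogous convexity for $\bar{\Phi}_p$ and iterating leads to the same conclusion.

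With the symmetrization inequality in hand, the remainder is classical: a suitable sequence of Steiner symmetrizations converges in Hausdorff metric to a Euclidean ball of the same volume as $K$, so continuity of $\Phi_p^*$ together with the monotonicity above gives $V(\Phi_p^* K) \leq V(\Phi_p^* rB)$, which is the stated inequality after normalization. For equality, equality must hold at every stage of the symmetrization, forcing $K$ to be Steiner symmetric about every hyperplane through a common point, and hence a centered ball; the affine invariance from the first step then upgrades the extremals to all origin-centered ellipsoids.

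The principal difficulty lies in the Steiner symmetrization inequality itself. For the symmetric operator $\Pi_p$, the integrand $|u \cdot v|^p$ appearing in $h(\Pi_p K,\cdot)^p$ is even, and it is precisely this parity that powers the classical polar inclusion $\Pi_p^* S_u K \supseteq S_u \Pi_p^* K$. The nonsymmetric integrands $(u \cdot v)_{\pm}^p$ entering $h(\Pi_p^{\pm}K,\cdot)^p$ carry no such parity, no such inclusion holds for $\Pi_p^{\pm}$ individually, and a naive Steiner symmetrization of $\Phi_p^* K$ does not control $\Phi_p^* S_u K$. The essential new input is that, although the individual components $\Pi_p^{\pm}$ misbehave under $S_u$, the particular $L_p$ combination $\Phi_p$, once paired with its conjugate $\bar{\Phi}_p$ after passage to the polar, has enough residual symmetry to absorb the defect. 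Making this precise is where the substantive new work of the proof must be done.
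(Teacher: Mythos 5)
Your high-level plan — Steiner symmetrization, using affine invariance of the functional, converging to a ball, promoting the ball extremals to ellipsoids — is broadly the right template, and the observation that one may work with normalized operators (the paper uses the parametrization $\Pi_p^\tau$, $\tau\in[-1,1]$, via (\ref{piconst})) is sound. However, the heart of your argument contains a genuine error and some significant gaps.

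First, the identity on which your ``conjugate functional'' mechanism rests is false. Since $R_u$ is an orthogonal involution, the contravariance of $\Pi_p^+$ gives $\Pi_p^+(R_u K)=R_u\,\Pi_p^+K$, \emph{not} $R_u\,\Pi_p^-K$. (The swap $\Pi_p^+\leftrightarrow \Pi_p^-$ is triggered by the point reflection $-I$, because $\Pi_p^-K=\Pi_p^+(-K)$, and $-I\neq R_u$.) Consequently $V(\Phi_p^*(R_uK))=V(\Phi_p^*K)$ trivially, and the claimed relation $V(\Phi_p^*(R_uK))=V(\bar\Phi_p^*K)$, together with the $T_u$-inclusion built on it, collapses. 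More importantly, your premise that ``no such inclusion holds for $\Pi_p^{\pm}$ individually'' — which motivates the whole conjugate/shadow-system detour — is wrong. The paper's central Lemma \ref{st} establishes precisely $S_u\Pi_p^{\tau,*}K\subseteq \Pi_p^{\tau,*}S_uK$ for \emph{every} $\tau\in[-1,1]$, including $\tau=\pm1$, for smooth $K$. The parity of the integrand is not what powers the inclusion; what powers it is the subadditivity $\varphi_\tau(a+b)\leq \varphi_\tau(a)+\varphi_\tau(b)$ (valid since $\varphi_\tau(t)=|t|+\tau t$ is convex and $1$-homogeneous) combined with the elementary inequality $(a+b)^p(c+d)^{1-p}\leq a^pc^{1-p}+b^pd^{1-p}$, exactly as in the Lutwak--Yang--Zhang argument. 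No auxiliary symmetrization $T_u$ or pairing with $\bar\Phi_p$ is needed.

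Second, you omit the class reduction step. Lemma \ref{st} is proved for smooth bodies only, and in a Steiner-symmetrization argument the equality conditions degenerate in the limit, so one cannot simply pass to the limit and retain them. The paper uses the duality $V_p(K,\mathrm{M}_p^\tau L)=\widetilde V_{-p}(L,\Pi_p^{\tau,*}K)$ (Lemma \ref{durch}), the $L_p$ and dual $L_p$ Minkowski inequalities, and the fact that $\mathrm{M}_p^\tau K$ is always smooth (Lemma \ref{smooth}) to show (Lemma \ref{ineqgaphi}) that any maximizer of $V(K)^{n/p-1}V(\Pi_p^{\tau,*}K)$ must be smooth. Without this you have no handle on the equality case. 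Third, your equality argument is too optimistic: the equality condition in Lemma \ref{st} only gives, for each direction $u$, some $r\in[0,1]$ such that the $r:(1-r)$ division points of the $u$-parallel chords of $K$ are coplanar; this is not Steiner symmetry, and one genuinely needs Gruber's generalization of the Bertrand--Brunn theorem (Theorem \ref{gruber}) to conclude that $K$ is an ellipsoid.
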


The case $\Phi_p = \Pi_p$ of Theorem \ref{main1} is the $L_p$ Petty
projection inequality by Lutwak, Yang, and Zhang.

The natural problem arises to determine for fixed $K \in
\mathcal{K}^n_{\mathrm{o}}$ the extreme values of $V(\Phi_p^* K)$
among all suitably normalized (e.g., satisfying $\Phi_p B =
B$)\linebreak $L_p$ projection bodies (\ref{compipp}). Here, we
will show that for $K \in \mathcal{K}^n_{\mathrm{o}}$,
\[
V(\Pi_p^* K) \leq V(\Phi_p^* K) \leq V(\Pi_p^{\pm,*}K).
\]
If $K$ is not origin-symmetric and $p$ is not an odd integer,
these inequalities are strict unless $\Phi_p = \Pi_p$, or $\Phi_p =
\Pi_p^{\pm}$, respectively. This shows that each of the new
inequalities established in Theorem \ref{main1} strengthens and
implies the previously known $L_p$ Petty projection inequality and
that the nonsymmetric operators $\Pi_p^{\pm}$ (and their
multiples) give rise to the strongest inequalities.

\vspace{0.3cm}

Centroid bodies (volume normalized moment bodies) are a classical notion from geometry which have
attracted increased attention in recent years, see e.g.,
\textbf{\cite{gardner2ed, grinberg:zhang, lutwak86, lutwak90, LYZ2000}}. The {\it moment body} $\mathrm{M}K$ of a
convex body $K$ is the convex body defined by
\[h(\mathrm{M}K,u)=\int_{K} |u \cdot x |\,dx, \qquad u \in S^{n-1}.  \]
If $K$ has nonempty interior, then $\Gamma K =
V(K)^{-1}\mathrm{M} K$ is the {\it centroid body} of $K$.

Petty established the Petty projection inequality as a
consequence of the {\it Busemann--Petty centroid inequality}
\textbf{\cite{petty61}}: Among all convex bodies of given volume, the ones
whose centroid bodies have minimal volume are precisely the
ellipsoids. Lutwak, Yang, and Zhang \textbf{\cite{LYZ2000}} (see also Campi
and Gronchi \textbf{\cite{Campi:Gronchi02a}}) established the $L_p$ version
of the Busemann--Petty centroid inequality: For $p
> 1$ and convex bodies $K$ containing the origin in their
interiors,
\begin{equation}\label{lyzinequ2}
V(K)^{n/p-1}V(\mathrm{M}_pK)\leq V(B)^{n/p},
\end{equation}
with equality if and only if $K$ is an ellipsoid centered at the
origin. Here, $\mathrm{M}_pK$ denotes the (symmetric) {\it $L_p$
moment body}, defined in \textbf{\cite{LZ1997}} by
\[\mathrm{M}_pK=\mbox{$\frac{1}{2}$} \cdot
\mathrm{M}_p^+K+_p\mbox{$\frac{1}{2}$} \cdot \mathrm{M}_p^-K,  \]
where $\mathrm{M}_p^{\pm}K$ are the {\it nonsymmetric $L_p$ moment
bodies} (see Section 3). Since their introduction $L_p$ moment
bodies have become the focus of intense study, see e.g.,
\textbf{\cite{Campi:Gronchi02a, fleury, grinberg:zhang, haberl08,
haberl:ludwig, Ludwig:Minkowski, LYZ2000,
Yaskin:Yaskina_centroids}} and the noted paper
\textbf{\cite{paouris}}.

Ludwig \textbf{\cite{Ludwig:Minkowski}} characterized moment bodies as the
unique (non-trivial) \linebreak homogeneous Minkowski valuations
which intertwine volume preserving linear transformations. For $p
> 1$, Ludwig \textbf{\cite{Ludwig:Minkowski}} introduced and characterized the two-parameter family
\begin{equation}\label{commpp}
\mbox{$c_1$} \cdot
\mathrm{M}_p^+K+_p\mbox{$c_2$} \cdot \mathrm{M}_p^-K, \qquad K\in\mathcal{K}^n_{\mathrm{o}},
\end{equation}
as all of the possible $L_p$ analogues of moment bodies.

Our $L_p$ Busemann--Petty centroid inequality for the entire class (\ref{commpp})
of $L_p$ moment bodies is:

\begin{theo}\label{main2} Let $K \in \mathcal{K}^n_{\mathrm{o}}$ and $p > 1$. If $\Psi_p K$ is the convex body defined \nolinebreak by
\[\Psi_p K = c_1\cdot\mathrm{M}_p^+K+_pc_2\cdot\mathrm{M}_p^-K, \]
where $c_1, c_2 \geq 0$ are not both zero, then
\[V(K)^{-n/p-1}V(\Psi_p K)\geq V(B)^{-n/p-1}V(\Psi_p B),\] with
equality if and only if $K$ is an ellipsoid centered at the
origin.
\end{theo}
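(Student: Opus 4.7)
The plan is to prove the inequality by Steiner symmetrization, adapting the shadow system method used by Campi and Gronchi to establish the symmetric $L_p$ Busemann--Petty centroid inequality. The reduction is standard: if for every $v \in S^{n-1}$ the Steiner symmetrization $K \mapsto K^{S_v}$ does not increase $V(\Psi_p K)$, then iterating in a dense sequence of directions produces bodies converging in the Hausdorff metric to the origin-centered ball $B_K$ of volume $V(K)$, and continuity of $V \circ \Psi_p$ yields $V(\Psi_p K) \geq V(\Psi_p B_K)$. Since $M_p^\pm$ is $(n+p)/p$-homogeneous under dilations, a one-line scaling gives $V(\Psi_p B_K) = (V(K)/V(B))^{(n+p)/p} V(\Psi_p B)$, which rearranges to the stated inequality.

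The central computation is the behaviour of the support function of $\Psi_p K_t$ along a shadow system. Writing $K_t = \{x + t f(P_{v^\perp} x)\, v : x \in K\}$, the parametrizing map has Jacobian $1$ (the gradient of $f$ lies in $v^\perp$), so a change of variables in the defining integrals gives
\[
h(\Psi_p K_t, u)^p = c_1^p\, h(\mathrm{M}_p^+ K_t, u)^p + c_2^p\, h(\mathrm{M}_p^- K_t, u)^p = \int_{K} F\bigl(u \cdot x + t (u \cdot v) f(P_{v^\perp} x)\bigr)\, dx,
\]
where $F(s) = c_1^p s_+^p + c_2^p s_-^p$ is convex on $\mathbb{R}$ (both summands are convex for $p \geq 1$). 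Hence $h(\Psi_p K_t, u)^p$ is a convex function of $t$ for each fixed $u$; in Campi--Gronchi terminology, $\Psi_p K_t$ is an $L_p$-shadow system, and their convexity theorem for the volume of such systems supplies convexity of $t \mapsto V(\Psi_p K_t)$. Choosing $f(x') = -\tfrac{1}{2}(a(x') + b(x'))$ in terms of the chord endpoints $[a(x'), b(x')]$ of $K$ parallel to $v$ produces a shadow system on $t \in [0, 2]$ with $K_0 = K$, $K_1 = K^{S_v}$, and $K_2$ equal to the reflection of $K$ in $v^\perp$. Covariance of $\Psi_p$ under this isometry gives $V(\Psi_p K_2) = V(\Psi_p K)$, and convexity then forces $V(\Psi_p K^{S_v}) \leq \tfrac{1}{2}(V(\Psi_p K_0) + V(\Psi_p K_2)) = V(\Psi_p K)$, completing the symmetrization step.

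The main obstacle is the equality characterization. For $p > 1$, strict convexity of $s_+^p$ and $s_-^p$ on their respective domains of positivity provides the required rigidity: equality in a single Steiner symmetrization forces the integrand above to be affine in $t$ almost everywhere on $K$. Tracking this constraint over all unit directions $u$ and $v$, and handling separately the contributions of the $\mathrm{M}_p^+$ and $\mathrm{M}_p^-$ summands---whose one-sided convexity regions couple in a subtler way than in the symmetric case treated by Lutwak, Yang, and Zhang---one must deduce that $K$ is an origin-centered ellipsoid. Carefully disentangling these two one-sided constraints so that the asymmetric equality case still collapses to the classical ellipsoid characterization is the technically delicate point.
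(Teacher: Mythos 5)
Your route is genuinely different from the paper's. The paper does \emph{not} symmetrize $\Psi_p$ directly. Instead, Theorem~\ref{main2} is derived from Theorem~\ref{main1} by a class-reduction argument: it sets $K=\mathrm{M}_p^{\tau}L$ in Theorem~\ref{main1}, combines the resulting inequality $V(\Pi_p^{\tau,*}\mathrm{M}_p^{\tau}L)^{-p}\geq V(B)^{-n}V(\mathrm{M}_p^{\tau}L)^{n-p}$ with inequality (\ref{i0}) (itself a consequence of the adjoint identity of Lemma~\ref{durch} and the dual $L_p$ Minkowski inequality), and obtains the claim together with the equality case inherited from Theorem~\ref{main1}. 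You instead adapt Campi--Gronchi's shadow-system method to the asymmetric $\Psi_p$. For the inequality itself your plan is sound: since $F(s)=c_1 s_+^p + c_2 s_-^p$ is convex and positively $p$-homogeneous, the function $G(u,s)=\int_K F(u\cdot x+s\beta(x))\,dx$ is convex and $p$-homogeneous in $(u,s)\in\mathbb{R}^{n+1}$, so $G^{1/p}$ is the support function of a convex body $\tilde L\subset\mathbb{R}^{n+1}$, $\Psi_p K_t$ is the shadow system $P_t\tilde L$, and Shephard's convexity of mixed volumes of shadow systems gives convexity of $t\mapsto V(\Psi_p K_t)$; the symmetrization and scaling reductions are then routine. (Note a small slip: the definition $h(\alpha\cdot K+_p\beta\cdot L,\cdot)^p=\alpha h(K,\cdot)^p+\beta h(L,\cdot)^p$ gives coefficients $c_1,c_2$ in $F$, not $c_1^p,c_2^p$; this does not affect convexity.)

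The genuine gap is the equality characterization, which you explicitly leave as ``the technically delicate point'' --- but the theorem \emph{is} the equality statement together with the inequality, so the proof is not complete without it. Moreover the difficulty is not merely technical bookkeeping. When $c_1=0$ or $c_2=0$ (the most important cases, $\Psi_p=\mathrm{M}_p^{\mp}$), the function $F$ vanishes identically on a half-line, so ``strict convexity of $F$'' fails precisely where you invoke it, and the integrand in $t$ is affine on a nontrivial region of $K$ for free. One must therefore argue globally, across all directions $u$, that the affinity forced by equality in a single symmetrization still pins down chord division points --- and then appeal to something like Gruber's generalized Bertrand--Brunn theorem (Theorem~\ref{gruber}), which allows an $r$-division ratio $r\neq 1/2$. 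Nothing in your sketch produces the coplanarity of $r$-division points of parallel chords; that is exactly the rigidity the paper extracts from the equality case of inequality (\ref{numbers}) inside Lemma~\ref{st}, and it does not fall out of convexity of $V(\Psi_p K_t)$ alone. Until you supply the analogue of that step (or show that equality forces $K_2$ to be a translate of $K_0$ and then run a separate argument), the equality case is open.
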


In fact, in Section 6 a stronger version of Theorem \ref{main2},
valid for all star bodies, will be established.

For $K\in\mathcal{K}^n_{\mathrm{o}}$ and suitably normalized
(e.g., satisfying $\Psi_p B=B$) $L_p$ moment bodies
(\ref{commpp}), we will show that
\[V(\mathrm{M}_p K) \geq V(\Psi_p K) \geq V(\mathrm{M}_p^{\pm}K). \]
If $K$ is not origin-symmetric and $p$ is not an odd integer,
these inequalities are strict unless $\Psi_p = \mathrm{M}_p$, or
$\Psi_p = \mathrm{M}_p^{\pm}$, respectively. Consequently, each of
the new inequalities established in Theorem \ref{main2}
strengthens and implies inequality (\ref{lyzinequ2}). Moreover,
the nonsymmetric operators $\mathrm{M}_p^{\pm}$ provide the
strongest version of the $L_p$ Busemann--Petty centroid
inequality.

Recall that for $K \in \mathcal{K}_{\mathrm{o}}^n$ the
Blaschke--Santal\'{o} inequality states
\[V(K)V(K^s) \leq V(B)^2,  \]
with equality if and only if $K$ is an ellipsoid. Here,
$K^s=(K-s)^*$ is the polar body of $K$ with respect to the
Santal\'{o} point $s$ of $K$, i.e., the unique point $s \in
\mathrm{int}\,K$ which minimizes $V((K-x)^*)$ among all
translates $K - x$, for $x \in \mathrm{int}\, K$. From Theorem
\ref{main2}, we obtain:

\begin{coro*} \label{bscor} If $\Psi_p$ is defined as in Theorem \ref{main2}, then for $K \in
\mathcal{K}^n_{\mathrm{o}}$,
\[V(K)^{n/p+1}V(\Psi_p^s K)\leq V(B)^{n/p+1}V(\Psi_p^s B),\]
with equality if and only if $K$ is an ellipsoid centered at the
origin.
\end{coro*}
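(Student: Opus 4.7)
The plan is to combine Theorem \ref{main2} with the classical Blaschke--Santal\'o inequality applied to the body $\Psi_p K$. Writing $s$ for the Santal\'o point of $\Psi_p K$, the classical inequality gives
\[ V(\Psi_p K)\, V(\Psi_p^s K) \leq V(B)^2, \]
with equality precisely when $\Psi_p K$ is an ellipsoid. On the other hand, Theorem \ref{main2} rearranges to
\[ V(\Psi_p K) \geq V(\Psi_p B)\,\bigl(V(K)/V(B)\bigr)^{n/p+1}, \]
so dividing the Blaschke--Santal\'o bound by this lower bound on $V(\Psi_p K)$ yields
\[ V(\Psi_p^s K) \leq \frac{V(B)^{n/p+3}}{V(\Psi_p B)}\cdot V(K)^{-(n/p+1)}. \]

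To recognize the right-hand side as $V(B)^{n/p+1}V(\Psi_p^s B)V(K)^{-(n/p+1)}$, I would observe that $\Psi_p B$ is a ball. Indeed, $\mathrm{M}_p^{\pm}$ commute with orthogonal transformations and so do $L_p$ Minkowski sums; hence $\Psi_p B$ is rotation invariant. In particular its Santal\'o point is the origin and $\Psi_p B$ is a (centered) ellipsoid, so Blaschke--Santal\'o holds with equality for $\Psi_p B$:
\[ V(\Psi_p B)\, V(\Psi_p^s B) = V(B)^2. \]
Substituting $V(\Psi_p B) = V(B)^2/V(\Psi_p^s B)$ into the previous display gives exactly
\[ V(K)^{n/p+1}V(\Psi_p^s K) \leq V(B)^{n/p+1}V(\Psi_p^s B), \]
which is the asserted inequality.

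For the equality case, equality forces equality in Theorem \ref{main2}, hence $K$ must be an ellipsoid centered at the origin. Conversely, when $K$ is a centered ellipsoid one uses the $GL(n)$-equivariance of $\mathrm{M}_p^{\pm}$ (and hence of $\Psi_p$) to see that $\Psi_p K$ is again a centered ellipsoid, so the Blaschke--Santal\'o step also holds with equality, and a direct check confirms that both sides of the displayed inequality coincide. The only real subtlety to verify is this equivariance of $\Psi_p$ on ellipsoids, which ensures that no loss occurs in the Blaschke--Santal\'o step for the extremal bodies; everything else is a straightforward chaining of the two inequalities.
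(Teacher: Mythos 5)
Your proof is correct and follows exactly the route the paper indicates: the paper derives the Corollary in one line as "a combination of Theorem \ref{main2} with the Blaschke--Santal\'o inequality," which is precisely your chaining of the two inequalities, together with the (correct) observations that $\Psi_p B$ is a centered ball and that $\Psi_p$ carries centered ellipsoids to centered ellipsoids for the equality discussion.
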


Here, the case $\Psi_p = \mathrm{M}_p$ was established by Lutwak
and Zhang \nolinebreak \textbf{\cite{LZ1997}}. We remark that
$\mathrm{M}_p^+K$ converges to $K$ as $p \rightarrow \infty$.
Thus, as a limiting case we obtain for $\Psi_p=\mathrm{M}_p^+$ the
classical Blaschke--Santal\'{o} inequality.

\section{Background Material}

In the following we state the necessary background material. For
quick reference, we collect basic properties of $L_p$ mixed and
dual mixed volumes.

The setting for this article is Euclidean $n$-space
$\mathbb{R}^n$ with $n \geq 3$. We will also assume throughout
that $1 < p < \infty$. Thus, in the following we will omit these
restrictions on $n$ and $p$.

Associated with a convex body $K \in \mathcal{K}^n_{\mathrm{o}}$
is its surface area measure, $S(K,\cdot)$, on $S^{n-1}$. For a
Borel set $\omega \subseteq S^{n-1}$, $S(K,\omega)$ is the
$(n-1)$-dimensional Hausdorff measure of the set of all boundary
points of $K$ for which there exists a normal vector of $K$
belonging to $\omega$. By Minkowski's uniqueness theorem (see
e.g., \textbf{\cite[\textnormal{p.\ 397}]{Schneider:CB}}), the
convex body $K$ is determined up to translation by the measure
$S(K,\cdot)$.

We call a convex body $K \in \mathcal{K}^n_{\mathrm{o}}$ {\em
smooth} if its boundary is $C^2$ with everywhere positive
curvature. For a smooth convex body $K$, the surface area measure
$S(K,\cdot)$ is absolutely continuous with respect to spherical
Lebesgue measure:
\[dS(K,u)=f(K,u)\,du, \qquad u \in S^{n-1}.  \]
The positive continuous function $f(K,\cdot)$ is called the {\em
curvature function} of $K$. It is the reciprocal of the Gauss
curvature as a function of the outer normals.

For $p \geq 1$, $K, L
\in \mathcal{K}^n_{\mathrm{o}}$ and $\alpha, \beta \geq 0$ (not
both zero), the {\it $L_p$ Minkowski combination} $\alpha \cdot K
+_p \beta \cdot L$ is the convex body defined by
\[h(\alpha \cdot K +_p \beta \cdot L,\cdot)^p=\alpha h(K,\cdot)^p+\beta h(L,\cdot)^p.   \]
Introduced by Firey in the 1960's, this notion is the basis of
what has become known as the $L_p$ Brunn--Minkowski theory (or the
Brunn--Minkowski--Firey theory). Obviously, $L_p$ Minkowski and
the usual scalar multiplications are related by $\alpha\cdot
K=\alpha^{1/p}K$.

For $K, L \in \mathcal{K}^n_{\mathrm{o}}$, the $L_p$ mixed volume,
$V_p(K,L)$, was defined in \textbf{\cite{Lutwak93b}} by
\[\frac{n}{p}V_p(K,L)=\lim \limits_{\varepsilon \rightarrow 0^+} \frac{V(K +_p \varepsilon \cdot L)-V(K)}{\varepsilon}.  \]
Clearly, the diagonal form of $V_p$ reduces to ordinary volume,
i.e., for $K \in \mathcal{K}^n_{\mathrm{o}}$,
\begin{equation} \label{vol1}
V_p(K,K)=V(K).
\end{equation}
It was shown in \textbf{\cite{Lutwak93b}} that corresponding to each convex
body $K \in \mathcal{K}^n_{\mathrm{o}}$, there is a positive
Borel measure on $S^{n-1}$, the $L_p$ {\em surface area measure}
$S_p(K,\cdot)$ of $K$, such that for every $L \in
\mathcal{K}^n_{\mathrm{o}}$,
\begin{equation} \label{defvp}
V_p(K,L)=\frac{1}{n} \int_{S^{n-1}} h(L,u)^p dS_p(K,u).
\end{equation}
The measure $S_1(K,\cdot)$ is just the surface area measure of
$K$. Moreover, the $L_p$ surface area measure is absolutely
continuous with respect to $S(K,\cdot)$:
\begin{equation}\label{absc}
dS_p(K,u)=h(K,u)^{1-p}\,dS(K,u), \qquad u \in S^{n-1}.
\end{equation}
It was shown in \textbf{\cite{Lutwak93b}} that, if $K, L \in
\mathcal{K}^n_{\mathrm{o}}$ and $p \neq n$, then
\begin{equation*}
\begin{array}{lcl} S_p(K,\cdot)=S_p(L,\cdot) & \Longrightarrow &
K=L\phantom{\lambda \quad \, \lambda > 0.}
\end{array}
\end{equation*}
and, if $p=n$, then
\begin{equation*}
\begin{array}{lcl} S_n(K,\cdot)=S_n(L,\cdot) & \Longrightarrow & K=\lambda
L, \quad \lambda > 0.\end{array}
\end{equation*}
These uniqueness properties of the $L_p$ surface area measure are
consequences of the $L_p$ Minkowski inequality \textbf{\cite{Lutwak93b}}:
If $K, L \in \mathcal{K}^n_{\mathrm{o}}$, then
\begin{equation}\label{minkin}
V_p(K,L)^n \geq V(K)^{n-p}V(L)^{p},
\end{equation}
with equality if and only if $K$ and $L$ are dilates.

Firey's $L_p$ Brunn--Minkowski inequality states: If $K, L \in
\mathcal{K}^n_{\mathrm{o}}$, then
\begin{equation} \label{bmin}
V(K +_p L)^{p/n} \geq V(K)^{p/n} + V(L)^{p/n},
\end{equation}
with equality if and only if $K$ and $L$ are dilates.

For a compact set $L$ in $\mathbb{R}^n$ which is star-shaped with
respect to the origin, we denote by $\rho(L,x)=\max\{\lambda \geq
0: \,\lambda x\in L\}$, $x \in \mathbb{R}^n \backslash \{0\}$, the
radial function of $L$. If $\rho(L,\cdot)$ is positive and
continuous, we call $L$ a star body. The set of star bodies is
denoted by $\mathcal{S}^n$.

If $K \in \mathcal{K}^n_{\mathrm{o}}$ is a convex body, then it
follows from the definitions of support functions and radial
functions, and the definition of the polar body of $K$, that
\begin{equation} \label{suprad}
\rho(K^*,\cdot)=h(K,\cdot)^{-1} \qquad \mbox{and} \qquad
h(K^*,\cdot)=\rho(K,\cdot)^{-1}.
\end{equation}
For $\alpha, \beta \geq 0$ (not both zero), the {\it $L_p$
harmonic radial combination} $\alpha \cdot K \,\widetilde{+}_p\,
\beta \cdot L$ of $K, L \in \mathcal{S}^n$ is the star body
defined by
\[\rho(\alpha \cdot K\, \widetilde{+}_p \, \beta \cdot L,\cdot)^{-p}=\alpha \rho(K,\cdot)^{-p}+\beta \rho(L,\cdot)^{-p}.   \]
Although our notation does not reflect the obvious difference
between $L_p$ and dual $L_p$ scalar multiplication, there should
be no possibility of confusion. Clearly, $L_p$ harmonic radial and the usual scalar multiplications
are related by $\alpha\cdot K=\alpha^{-1/p}K$.

For convex bodies, Firey started investigations of harmonic $L_p$
combinations which were continued by Lutwak leading to a dual
$L_p$ Brunn--Minkowski theory. The dual $L_p$ mixed volume
$\widetilde{V}_{-p}(K,L)$ of $K, L \in \mathcal{S}^n$ was defined
in \textbf{\cite{Lutwak96}} by
\[-\frac{n}{p}\widetilde{V}_{-p}(K,L)=\lim \limits_{\varepsilon \rightarrow 0^+} \frac{V(K\, \widetilde{+}_p\, \varepsilon \cdot L)-V(K)}{\varepsilon}.  \]
Clearly, the diagonal form of $\widetilde{V}_{-p}$ reduces to
ordinary volume, i.e., for $L \in \mathcal{S}^n$,
\begin{equation}\label{vol2}
\widetilde{V}_{-p}(L,L)=V(L).
\end{equation}
The polar coordinate formula for volume leads to the following
integral representation of the dual $L_p$ mixed volume
$\widetilde{V}_{-p}(K,L)$ of the star bodies $K, L$:
\begin{equation} \label{defvminp}
\widetilde{V}_{-p}(K,L)=\frac{1}{n}\int_{S^{n-1}}\rho(K,u)^{n+p}\rho(L,u)^{-p}\,du.
\end{equation}
Here, integration is with respect to spherical Lebesgue measure.
An application of H\"older's integral inequality to
(\ref{defvminp}) yields the dual $L_p$ Minkowski inequality
\textbf{\cite{Lutwak96}}: If $K, L \in \mathcal{S}^n$, then
\begin{equation}\label{dminkin}
\widetilde{V}_{-p}(K,L)^n \geq V(K)^{n+p}V(L)^{-p},
\end{equation}
with equality if and only if $K$ and $L$ are dilates.

The dual $L_p$ Brunn--Minkowski inequality \textbf{\cite{Lutwak96}} is: If
$K, L \in \mathcal{S}^n$, then
\begin{equation}\label{dbminkin}
V(K \, \widetilde{+}_p \, L)^{-p/n} \geq V(K)^{-p/n}+V(L)^{-p/n},
\end{equation}
with equality if and only if $K$ and $L$ are dilates.

\section{Nonsymmetric $L_p$ Projection and Moment Bodies}

In this section we define nonsymmetric $L_p$
projection bodies $\Pi_p^+K$ as well as nonsymmetric $L_p$ moment bodies
$\mathrm{M}_p^+K$ and discuss basic properties of the corresponding operators.

Recall that the volume of the Euclidean unit ball $B$ is given by
\[\kappa_n=\pi^{n/2}/\Gamma(1+\mbox{$\frac{n}{2}$}).  \]
We define $c_{n,p}$ by
\[c_{n,p}=\frac{\Gamma\left(\mbox{$\frac{n+p}{2}$}\right)}{\pi^{(n-1)/2}\Gamma\left(\mbox{$\frac{1+p}{2}$}\right)}.  \]
For each finite Borel measure $\mu$ on $S^{n-1}$, we define a
continuous function $\mathrm{C}_p^+\mu$ on $S^{n-1}$, the {\em
nonsymmetric $L_p$ cosine transform} of $\mu$, by
\[(\mathrm{C}_p^+\mu)(u) = c_{n,p}\int_{S^{n-1}} (u \cdot v)_+^p\, d\mu(v), \qquad u \in S^{n-1},  \]
where $(u\cdot v)_+=\max\{u\cdot v, 0\}$. For $f \in C(S^{n-1})$,
let $\mathrm{C}_p^+f$ be the nonsymmetric $L_p$ cosine transform
of the absolutely continuous measure (with respect to spherical
Lebesgue measure) with density $f$. The normalization above was
chosen so that $\mathrm{C}_p^+1=1$.

The {\it nonsymmetric $L_p$ projection body} $\Pi_p^+K$ of $K \in
\mathcal{K}^n_{\mathrm{o}}$, first considered in \textbf{\cite{Lutwak96}},
is the convex body defined by
\[h(\Pi_p^+K,\cdot)^p=\mathrm{C}_p^+S_p(K,\cdot).  \]
For a star body $L \in S^{n}$, define the {\it nonsymmetric $L_p$
moment body} of $L$ by
\[h(\mathrm{M}_p^+L,\cdot)^p=\mathrm{C}_p^+\rho(L,\cdot)^{n+p}.  \]
Using polar coordinates, it is easy to verify that for $L \in
\mathcal{S}^n$,
\begin{equation}\label{mpnormal}
h(\mathrm{M}_p^+L,u)^p=c_{n,p}(n+p)\int_L (u \cdot x )_+^p\,dx, \qquad u \in S^{n-1}.
\end{equation}
Note that the normalizations are chosen such that
$\mathrm{M}_p^+B=B$ and $\Pi_p^+B=B$. For $K \in
\mathcal{K}^n_{\mathrm{o}}$, we also define
\[\mathrm{M}_p^-K = \mathrm{M}_p^+(-K) \qquad \mbox{and} \qquad \Pi_p^-K=\Pi_p^+(-K).  \]

For a finite measure $\mu$ on $S^{n-1}$, it is not hard to show
that
\[\lim \limits_{p \rightarrow 1^+}(\mathrm{C}_p^+\mu)(u) = \frac{1}{2\kappa_{n-1}} \left \{ \int_{S^{n-1}}|u\cdot v |d\mu(v) + \int_{S^{n-1}}u\cdot v d\mu(v) \right \},      \]
where the first integral is the spherical cosine transform
$\mathrm{C}\mu$ of $\mu$. Recall that pointwise convergence of
support functions on $S^{n-1}$ implies convergence in the
Hausdorff metric of the respective bodies (cf.\ \textbf{\cite[\textnormal{p.\
54}]{Schneider:CB}}). Thus, since $h(\Pi
K,\cdot)=\frac{1}{2}\mathrm{C}S(K,\cdot)$ and since area measures
have their center of mass at the origin, we obtain for every $K
\in \mathcal{K}^n_{\mathrm{o}}$ as $p \rightarrow 1$,
\begin{equation} \label{extension}
\Pi_p^+K\, \rightarrow\, \kappa_{n-1}^{-1}\Pi K \qquad \mbox{and}
\qquad \mathrm{M}_p^+K \, \rightarrow \,
\frac{n+1}{2\kappa_{n-1}}\left ( \mathrm{M}(K)+m(K)\right ).
\end{equation}
Here, $m(K)$ is up to volume normalization the centroid of $K$:
\[m(K)=\int_K x\,dx.  \]
From representation (\ref{mpnormal}), we obtain for $K \in
\mathcal{K}^n_{\mathrm{o}}$ as $p \rightarrow \infty$,
\[\mathrm{M}_p^+K\, \rightarrow\, K.   \]

A map $\Phi$ defined on $\mathcal{K}^n$ and taking values in
$\mathcal{K}^n$ is called $\mathrm{SL}(n)$ {\em covariant}, if
for all $K \in \mathcal{K}^n$ and every $\phi \in \mathrm{SL}(n)$,
\[\Phi(\phi K)=\phi \Phi K.\]
It is said to be $\mathrm{SL}(n)$ {\em contravariant}, if for all
$K \in \mathcal{K}^n$ and every $\phi \in \mathrm{SL}(n)$,
\[\Phi(\phi K)=\phi^{-\mathrm{T}} \Phi K,\]
where $\phi^{-\mathrm{T}}$ denotes the inverse of the transpose
of $\phi$.

As usual, $\Phi$ is called {\em homogeneous of degree $r$}, for
$r \in \mathbb{R}$, if $\Phi (\lambda K) = \lambda^r\Phi (K)$ for
all $K \in \mathcal{K}^n$ and every $\lambda > 0$. We say $\Phi$
{\it is linearly associating} if $\Phi$ is $\mathrm{SL}(n)$ co-
or contravariant and homogeneous of degree $r$ for some $r \in
\mathbb{R}$.

It was shown in \textbf{\cite{Ludwig:Minkowski}} that $\Pi_p^{\pm}$ is an
$n/p-1$ homogeneous and $\mathrm{SL}(n)$ contra\-variant map,
while $\mathrm{M}_p^{\pm}$ is $\mathrm{SL}(n)$ covariant and
homogeneous of degree $n/p+1$, i.e., for every $\phi \in
\mathrm{SL}(n)$ and every $\lambda > 0$,
\[\begin{array}{rcl} \Pi_p^{\pm}(\phi K)=\phi^{-\mathrm{T}} \Pi_p^{\pm} K & \,\,\mbox{ and }\,\, & \Pi_p^{\pm}(\lambda K)=\lambda^{n/p-1}\Pi_p^{\pm} K \end{array}    \]
for every $K \in \mathcal{K}^n_{\mathrm{o}}$ and
\[\begin{array}{rcl} \phantom{^{-\mathrm{T}}}\, \mathrm{M}_p^{\pm}(\phi K)=\phi \mathrm{M}_p^{\pm} K
& \,\,\mbox{ and }\,\, & \mathrm{M}_p^{\pm}(\lambda
K)=\lambda^{n/p+1}\mathrm{M}_p^{\pm} K. \end{array}  \]

A map $\Phi: \mathcal{K}^n_{\mathrm{o}} \rightarrow
\mathcal{K}^n_{\mathrm{o}}$ is called an {\it $L_p$ Minkowski
valuation} if
\[\Phi(K \cup L) +_p \Phi(K \cap L) = \Phi K +_p \Phi L,  \]
whenever $K, L, K \cup L \in \mathcal{K}^n_{\mathrm{o}}$. The
{\em trivial} $L_p$ Minkowski valuations are $L_p$ Minkowski
combinations of the identity and central reflection. In
\textbf{\cite{Ludwig:Minkowski}} Ludwig has shown that $L_p$ combinations
of $\Pi_p^{\pm}$ and $\mathrm{M}_p^{\pm}$ are the (essentially)
uniquely determined linearly associating $L_p$ Minkowski
valuations. In order to state her result, let $\mathcal{P}^n_{co}$
($\mathcal{P}^n_{\mathrm{o}}$) denote the set of polytopes in
$\mathbb{R}^n$ which contain the origin (in their interior). For
$n \geq 3$, Ludwig \textbf{\cite{Ludwig:Minkowski}} proved the following:

\begin{theorem} \label{prop} If $\Phi: \mathcal{P}^n_{co} \rightarrow
\mathcal{K}_{\mathrm{o}}^n$ is a non-trivial $L_p$ Minkowski
valuation which is linearly associating, then there exist
constants $c_0 \in \mathbb{R}$ and $c_1, c_2 \geq 0$ such that
for every $K \in \mathcal{P}^n_{\mathrm{o}}$,
\[\Phi K = \left \{ \begin{array}{ll} c_1\Pi K & \mbox{if } p = 1 \\
c_1\cdot \Pi_p^+K +_p c_2 \cdot \Pi_p^-K & \mbox{if } p
> 1 \end{array} \right.  \]
 or
\[\Phi K=\left \{ \begin{array}{ll} c_0m(K) + c_1\mathrm{M}K & \mbox{if } p = 1 \\ c_1\cdot\mathrm{M}_p^+K+_pc_2\cdot\mathrm{M}_p^-K &
\mbox{if } p > 1. \end{array} \right.\]
\end{theorem}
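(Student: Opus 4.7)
The plan is to follow the standard strategy for classifying SL$(n)$-equivariant Minkowski valuations, adapted to the $L_p$ setting and with the added wrinkle that $\mathcal{P}^n_{co}$ includes polytopes with the origin on the boundary. First I would restrict attention to simplices having the origin as a vertex, say the standard simplex $T_0 = [0, e_1, \ldots, e_n]$. The subgroup of SL$(n)$ fixing $T_0$ setwise is large enough (it acts on $\{e_1,\ldots,e_n\}$ by scaling-compatible permutations) that, combined with the SL$(n)$ co- or contravariance of $\Phi$ and the fact that $\Phi T_0 \in \mathcal{K}^n_{\mathrm{o}}$, the body $\Phi T_0$ is forced into a low-dimensional family parametrized by at most two nonnegative weights, corresponding to the freedom to attach different $L_p$ multiples to $T_0$ and to its reflection $-T_0$.

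Second, I would use homogeneity to pin down the degree $r$. Applying $\Phi(\lambda T_0) = \lambda^r \Phi T_0$ and comparing with the SL$(n)$ equivariance on the rescaled simplex, together with the requirement that $\Phi T_0$ be nondegenerate convex, forces $r = n/p - 1$ in the contravariant case and $r = n/p + 1$ in the covariant case; in the latter case one additionally allows an equivariantly behaving translational summand of degree $n/p + 1$, which for $p = 1$ is exactly $c_0 m(K)$. Third, I would extend from simplices to all $P \in \mathcal{P}^n_o$ by an inductive cone decomposition: writing $P$ as a union of simplices $T_i = [0, F_i]$ over facets $F_i$ of $P$ (with the origin in the interior), the valuation identity \eqref{defval} applied to adjacent simplices lets one express $\Phi P$ as an $L_p$ Minkowski sum built from the $\Phi T_i$'s via inclusion–exclusion. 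Since each $\Phi T_i$ is determined by the weights $c_1, c_2$ (or $c_0, c_1$ when $p = 1$), and since $\Pi_p^{\pm}$ and $\mathrm{M}_p^{\pm}$ themselves satisfy the same valuation identity and equivariance, one obtains agreement of $\Phi$ with the asserted formula on all of $\mathcal{P}^n_o$.

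Fourth, one must still handle polytopes in $\mathcal{P}^n_{co} \setminus \mathcal{P}^n_o$, i.e.\ polytopes containing the origin on the boundary. Here the valuation property applied to splittings $P = P_1 \cup P_2$ where one piece is translated off the boundary, combined with continuity properties that descend from the SL$(n)$ equivariance (approximating a boundary-origin polytope by interior-origin polytopes via a small perturbation), extends the formula consistently. The dichotomy between the projection-type and moment-type cases comes from the homogeneity degree once it is pinned down; contravariance with degree $n/p - 1$ leaves only $\Pi_p^{\pm}$ combinations, while covariance with degree $n/p + 1$ leaves only $\mathrm{M}_p^{\pm}$ combinations.

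The main obstacle I expect is the rigidity step: showing that on $T_0$ the only SL$(n)$-equivariant bodies of the required homogeneity degree compatible with the valuation identity on \emph{all} cone-decompositions are the two-parameter families written down. This is where the nonsymmetric operators $\Pi_p^+$ and $\Pi_p^-$ (respectively $\mathrm{M}_p^+$ and $\mathrm{M}_p^-$) must be distinguished from each other, which requires exploiting the action of orientation-reversing elements of SL$(n)$ carefully — a simple reflection swaps the two operators but is not in SL$(n)$, so one uses instead a dimension-$n$ argument on the simplex's oriented facet structure. A secondary difficulty is the separate treatment of $p = 1$, where the $L_p$ addition degenerates to Minkowski addition, the cosine transform has a nontrivial kernel (odd functions), and an extra translational summand proportional to $m(K)$ becomes available, accounting for the additional $c_0$ parameter in the $p = 1$ clause.
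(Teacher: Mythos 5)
The paper does not prove this statement at all: it is Ludwig's classification theorem, quoted verbatim with the citation \cite{Ludwig:Minkowski}, so there is no internal proof to compare your sketch against. Judged on its own terms, your outline has the right overall shape of Ludwig's argument (reduce to simplices with a vertex at the origin, then extend to all of $\mathcal{P}^n_{\mathrm{o}}$ and $\mathcal{P}^n_{co}$ by triangulation and the valuation identity, with the contravariant/covariant dichotomy separating the $\Pi_p^{\pm}$ and $\mathrm{M}_p^{\pm}$ cases), but the step you identify as the ``main obstacle'' is in fact missing, and the mechanism you propose for it does not work.

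Concretely, the stabilizer of the standard simplex $T_0=[0,e_1,\dots,e_n]$ inside $\mathrm{SL}(n)$ is essentially the finite group of (even) permutations of $e_1,\dots,e_n$; a finite symmetry group imposes only finitely many linear constraints on the support function of $\Phi T_0$ and cannot confine it to a two-parameter family. What actually pins down $\Phi T_0$ in Ludwig's proof is the valuation identity applied to dissections of $T_0$ by hyperplanes through the origin and an $(n-2)$-dimensional face: the two pieces are $\mathrm{SL}(n)$-images of dilates of $T_0$, so combining the valuation property with co-/contravariance and $r$-homogeneity yields a Cauchy-type functional equation for the single unknown function governing $h(\Phi T_0,\cdot)$. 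Solving that functional equation (using the built-in regularity of support functions) is what produces exactly the solutions $t\mapsto c_1 t_+^p + c_2 t_-^p$, forces the homogeneity degrees $n/p-1$ (contravariant) and $n/p+1$ (covariant), and accounts for the extra summand $c_0 m(K)$ and the kernel of the cosine transform when $p=1$. Your sketch gestures at ``compatibility with all cone-decompositions'' but never sets up or solves this equation, so the rigidity step --- the heart of the theorem --- is not established.
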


Theorem \ref{prop} and (\ref{extension}) show that the $L_p$
combinations of $\Pi_p^{\pm}$ and $\mathrm{M}_p^{\pm}$ are all
$L_p$ extensions of projection and moment bodies.

Ludwig's classification results in \textbf{\cite{Ludwig:Minkowski}} were
in fact formulated with a different parametrization of the
families $c_1\cdot\Pi_p^++_pc_2\cdot\Pi_p^-$ and
$c_1\cdot\mathrm{M}_p^++_pc_2\cdot\mathrm{M}_p^-$. These
alternative representations will be very useful for us as well:
For $\tau \in [-1,1]$, define the function $\varphi_{\tau}:
\mathbb{R} \rightarrow [0,\infty)$ by
\[\varphi_{\tau}(t)=|t|+\tau t,\]
and, for $K \in \mathcal{K}_{\mathrm{o}}^n$, let $\Pi_p^{\tau}K
\in \mathcal{K}_{\mathrm{o}}^n$ be the convex body with support
function
\begin{equation} \label{piptau}
h(\Pi_p^{\tau}K,u)^p=c_{n,p}(\tau)\int_{S^{n-1}}\varphi_{\tau}(u\cdot
v)^p\,dS_p(K,v), \qquad u \in S^{n-1},
\end{equation}
where
\[c_{n,p}(\tau)=\frac{c_{n,p}}{(1+\tau)^p+(1-\tau)^p}.  \]
The normalization is again chosen such that $\Pi_p^{\tau}B=B$ for
every $\tau \in [-1,1]$. From the definition of $\Pi_p^{\pm}$ it
is easy to verify that
\begin{equation} \label{taupm}
\Pi_p^{\tau}K=\frac{(1+\tau)^p}{(1+\tau)^p+(1-\tau)^p}\cdot
\Pi_p^+K +_p \frac{(1-\tau)^p}{(1+\tau)^p+(1-\tau)^p} \cdot
\Pi_p^-K.
\end{equation}
In particular, if $K \in \mathcal{K}^n_{\mathrm{o}}$ is
origin-symmetric, then for any $\tau, \sigma \in [-1,1]$, we have
$\Pi_p^{\tau}K=\Pi_p^{\sigma}K.$

By (\ref{taupm}), the one-parameter family $\Pi_p^{\tau}$
constitutes a bridge between the $L_p$ projection body operator
$\Pi_p$ (the case $\tau=0$) as introduced by Lutwak, Yang, and
Zhang and their non-symmetric analogues $\Pi_p^{\pm}$ ($\tau=\pm
1)$). From (\ref{taupm}), it also follows that for every pair
$c_1, c_2 \geq 0$ (not both zero) there exist a $\tau \in [-1,1]$
and a constant $c > 0$ such that
\begin{equation} \label{piconst}
c_1\cdot\Pi_p^+K+_pc_2\cdot\Pi_p^-K=c\,\Pi_p^{\tau} K.
\end{equation}
Thus, instead of working with the $L_p$ combinations of the
operators $\Pi_p^{\pm}$ we can consider multiples of the
operators $\Pi_p^{\tau}$, $\tau \in [-1,1]$.

For a star body $L \in \mathcal{S}^n$, let $\mathrm{M}_p^{\tau}L
\in \mathcal{K}^n_{\mathrm{o}}$ be the convex body defined by
\begin{equation} \label{defmptau}
h(\mathrm{M}_p^{\tau}
L,u)^p=c_{n,p}(\tau)\int_{S^{n-1}}\varphi_{\tau}(u\cdot
v)^p\rho(L,v)^{n+p}\,dv, \qquad u \in S^{n-1}.
\end{equation}
Then $\mathrm{M}_p^{\tau}B=B$ for every $\tau \in [-1,1]$ and
\begin{equation} \label{mtaupm}
\mathrm{M}_p^{\tau}L=\frac{(1+\tau)^p}{(1+\tau)^p+(1-\tau)^p}\cdot
\mathrm{M}_p^+L +_p \frac{(1-\tau)^p}{(1+\tau)^p+(1-\tau)^p} \cdot
\mathrm{M}_p^-L.
\end{equation}
In particular, if $L \in \mathcal{S}^n$ is origin-symmetric, then
for any $\tau, \sigma \in [-1,1]$, we have
$\mathrm{M}_p^{\tau}L=\mathrm{M}_p^{\sigma}L.$

The family $\mathrm{M}_p^{\tau}$ forms a link between $L_p$ moment
bodies (the case $\tau=0$) as introduced by Lutwak and Zhang and
their non-symmetric analogues ($\tau=\pm 1$). From
(\ref{mtaupm}), it follows that for every pair $c_1, c_2 \geq 0$
(not both zero) there exists a $\tau \in [-1,1]$ and a constant $c
> 0$ such that
\begin{equation} \label{mconst}
c_1\cdot\mathrm{M}_p^+K+_pc_2\cdot\mathrm{M}_p^-K=c\,\mathrm{M}_p^{\tau}
K.
\end{equation}
Thus, instead considering $L_p$ combinations of
$\mathrm{M}_p^{\pm}$ we can work with multiples of
$\mathrm{M}_p^{\tau}$, $\tau \in [-1,1]$.

The following simple lemma will be crucial. Here and in the
following, $\Pi_p^{\tau,*} K$ denotes the polar body of
$\Pi_p^{\tau} K$.

\begin{lemma} \label{durch} If $K  \in \mathcal{K}^n_{\mathrm{o}}$ and $L \in \mathcal{S}^n$, then
\[V_p(K,\mathrm{M}_p^{\tau}L)=\widetilde{V}_{-p}(L,\Pi_p^{\tau,*}K).  \]
\end{lemma}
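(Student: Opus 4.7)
The proof is a direct Fubini computation combined with the defining integral formulas for $V_p$, $\widetilde{V}_{-p}$, $\mathrm{M}_p^{\tau}$, and $\Pi_p^{\tau}$, so the main task is bookkeeping rather than any genuine obstacle.

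The plan is to start from the integral representation \eqref{defvp} of the $L_p$ mixed volume with $L$ replaced by $\mathrm{M}_p^{\tau}L$, namely
\[
V_p(K,\mathrm{M}_p^{\tau}L)=\frac{1}{n}\int_{S^{n-1}} h(\mathrm{M}_p^{\tau}L,u)^p\,dS_p(K,u),
\]
and to substitute the definition \eqref{defmptau} of $h(\mathrm{M}_p^{\tau}L,u)^p$ as an integral against $\rho(L,v)^{n+p}\,dv$. This gives a double integral over $S^{n-1}\times S^{n-1}$ with integrand
\[
c_{n,p}(\tau)\,\varphi_{\tau}(u\cdot v)^p\,\rho(L,v)^{n+p}.
\]

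Next I would apply Fubini's theorem (the integrand is continuous, hence integrable) to interchange the order of integration, pulling $\rho(L,v)^{n+p}\,dv$ to the outside. The inner integral is then
\[
c_{n,p}(\tau)\int_{S^{n-1}}\varphi_{\tau}(u\cdot v)^p\,dS_p(K,u).
\]
Because $u\cdot v=v\cdot u$, this is precisely the right-hand side of \eqref{piptau} with the roles of $u$ and $v$ exchanged, i.e.\ it equals $h(\Pi_p^{\tau}K,v)^p$.

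Finally, using the first identity in \eqref{suprad} to rewrite $h(\Pi_p^{\tau}K,v)^p=\rho(\Pi_p^{\tau,*}K,v)^{-p}$, the expression becomes
\[
\frac{1}{n}\int_{S^{n-1}}\rho(L,v)^{n+p}\,\rho(\Pi_p^{\tau,*}K,v)^{-p}\,dv,
\]
which by \eqref{defvminp} is exactly $\widetilde{V}_{-p}(L,\Pi_p^{\tau,*}K)$. The only delicate point is checking the symmetry step: one must verify that the kernel $\varphi_{\tau}(u\cdot v)^p$ is literally symmetric in $u$ and $v$ so that the inner integral can be identified with the support function of $\Pi_p^{\tau}K$ evaluated at $v$; this is immediate from the definition $\varphi_{\tau}(t)=|t|+\tau t$.
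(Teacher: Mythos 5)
Your proof is correct and follows exactly the paper's argument: substitute the definition of $h(\mathrm{M}_p^{\tau}L,\cdot)^p$ into the integral representation \eqref{defvp}, apply Fubini, recognize the inner integral as $h(\Pi_p^{\tau}K,v)^p$ via \eqref{piptau} and the symmetry of $u\cdot v$, and convert to $\rho(\Pi_p^{\tau,*}K,v)^{-p}$ by \eqref{suprad} to land on \eqref{defvminp}. No differences worth noting.
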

\begin{proof} If $K \in \mathcal{K}^n_{\mathrm{o}}$ and $L \in
\mathcal{S}^n$, then, by (\ref{defvp}) and definition
(\ref{defmptau}),
\[V_p(K,\mathrm{M}_p^{\tau}L)=\frac{c_{n,p}(\tau)}{n}\int_{S^{n-1}}\int_{S^{n-1}}\varphi_{\tau}(u\cdot v)\rho(L,v)^{n+p}\,dv\,dS_p(K,u).\]
Thus, by Fubini's theorem, (\ref{suprad}) and definition
(\ref{piptau}),
\[V_p(K,\mathrm{M}_p^{\tau}L)=\frac{1}{n}\int_{S^{n-1}} \rho(L,v)^{n+p} \rho(\Pi_p^{\tau,*}K,v)^{-p} \,dv=\widetilde{V}_{-p}(L,\Pi_p^{\tau,*}K).\]
\end{proof}

In the following we discuss injectivity properties of the
operators $\Pi_p^+$ and $\mathrm{M}_p^+$. To this end, we first
collect some basic facts about spherical harmonics (see e.g.,
Schneider \textbf{\cite[\textnormal{Appendix}]{Schneider:CB}}).
We use $\mathcal{H}^n_k$ to denote the finite dimensional vector
space of spherical harmonics of dimension $n$ and order $k$. Let
$N(n,k)$ denote the dimension of $\mathcal{H}^n_k$.

Let $L_2(S^{n-1})$ denote the Hilbert space of square integrable
functions on $S^{n-1}$ with its usual inner product
$(\cdot\,,\cdot)$. The spaces $\mathcal{H}^n_k$ are pairwise
orthogonal with respect to this inner product. In each space
$\mathcal{H}_k^n$ we choose an orthonormal basis $\{Y_{k1},
\ldots, Y_{kN(n,k)}\}$. Then $\{Y_{k1}, \ldots, Y_{kN(n,k)}: k
\in \mathbb{N}\}$ forms a complete orthogonal system in
$L_2(S^{n-1})$, i.e., for every $f \in L_2(S^{n-1})$, the Fourier
series
\[f \sim \sum \limits_{k=0}^{\infty}\pi_k f  \]
converges in quadratic mean to $f$, where $\pi_k f$ is the
orthogonal projection of $f$ onto $\mathcal{H}_k^n$:
\[\pi_k f=\sum\limits_{i=1}^{N(n,k)} \left ( f,Y_{ki} \right ) Y_{ki}.   \]
In particular, for $f\in C(S^n)$,
\begin{equation} \label{funcdet}
\pi_kf=0 \quad \mbox{for all } k \in \mathbb{N} \qquad
\Longrightarrow \qquad f = 0.
\end{equation}
Thus, $f \in C(S^{n-1})$ is uniquely determined by its series
expansion.

For a finite Borel measure $\mu$ on $S^{n-1}$, we define
\[\pi_k \mu=\sum\limits_{i=1}^{N(n,k)} \int_{S^{n-1}}Y_{ki}(u)\,d\mu(u)\, Y_{ki}.  \]
If $f \in C(S^{n-1})$, then
\[\left ( f,\pi_k \mu \right ) = \int_{S^{n-1}} (\pi_k f)(u)\,d\mu(u).   \]
Thus, by  (\ref{funcdet}), the measure $\mu$ is uniquely
determined by its (formal) series expansion:
\begin{equation} \label{mudet}
\pi_k \mu=0 \quad \mbox{for all } k \in \mathbb{N} \qquad
\Longrightarrow \qquad \mu = 0.
\end{equation}

Of particular importance for us is the Funk--Hecke theorem: Let
$\phi$ be a continuous function on $[-1,1]$. If
$\mathrm{T}_{\phi}$ is the transformation on the set of finite
Borel measures on $S^{n-1}$ defined by
\[(\mathrm{T}_{\phi}\mu)(u)=\int_{S^{n-1}}\phi(u\cdot v)\,d\mu(v),  \]
then there are real numbers $a_k[\mathrm{T}_{\phi}]$, the {\em
multipliers} of $\mathrm{T}_{\phi}$, such that
\[\mathrm{T}_{\phi}Y_k = a_k[\mathrm{T}_{\phi}]\,Y_k \]
for every spherical harmonic $Y_k \in \mathcal{H}_k^n$. In
particular, by Fubini's theorem,
\begin{equation} \label{mult}
\pi_k \left ( \mathrm{T}_{\phi}\mu \right
)=a_k[\mathrm{T}_{\phi}]\pi_k \mu.
\end{equation}

We call a transformation $\mathrm{T}$ defined on the space of
finite Borel measures on $S^{n-1}$ and satisfying (\ref{mult}) a
{\em multiplier transformation}. Using (\ref{mudet}) and
(\ref{mult}), it follows that a multiplier transformation
$\mathrm{T}_{\phi}$ is injective if and only if all multipliers
$a_k[\mathrm{T}_{\phi}]$ are non-zero.

By the Funk--Hecke theorem, the nonsymmetric $L_p$ cosine
transform $\mathrm{C}_p^+$ is a multiplier transformation. The
numbers $a_k[\mathrm{C}_p^+]$ have been calculated in
\textbf{\cite{Rubin2000}}, see also \nolinebreak \textbf{\cite{haberl08}}: If $p$
is not an integer, then
\begin{equation} \label{pnotint}
a_k[\mathrm{C}_p^+] \neq 0,
\end{equation}
and if $p \in \mathbb{N}$, then
\begin{equation} \label{pint}
a_k[\mathrm{C}_p^+]=0 \mbox{ if and only if } k=2+p, 4+p, 6+p,
\ldots
\end{equation}
Consequently, $\mathrm{C}_p^+$ is injective if and only if $p$ is
not an integer.

Since a convex body $K \in \mathcal{K}_{\mathrm{o}}^n$ is
uniquely determined by its support function, by its radial
function and by its $L_p$ surface area measure, we conclude that
the operators $\Pi_p^+$ and $\mathrm{M}_p^+$ are injective if and
only if $p$ is not an integer. It is easy to verify that
\[\Pi_p^-K=\Pi_p^+(-K)=-\Pi_p^+K \qquad \mbox{and} \qquad \mathrm{M}_p^-K = \mathrm{M}_p^+(-K)=-\mathrm{M}_p^+K.  \]
Thus, the injectivity properties of $\Pi_p^+$ and $\mathrm{M}_p^+$
carry over to $\Pi_p^-$ and $\mathrm{M}_p^-$.

We will frequently use the following consequence of
(\ref{pnotint}) and (\ref{pint}).

\begin{lemma} \label{inject} If $K \in \mathcal{K}^n_{\mathrm{o}}$, $L \in \mathcal{S}^n$ and $p$ is not
an odd integer, then
\[\Pi_p^+K=\Pi_p^-K \qquad \mbox{or} \qquad \mathrm{M}_p^+L=\mathrm{M}_p^-L   \]
holds if and only if $K$, respectively $L$, is origin-symmetric.
\end{lemma}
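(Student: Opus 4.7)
The plan is to handle the ``if'' direction instantly and attack the ``only if'' via spherical harmonics and parity. If $K=-K$ (resp.\ $L=-L$), then $\Pi_p^-K=\Pi_p^+(-K)=\Pi_p^+K$ and similarly for $\mathrm{M}_p$, giving the easy implication. For the converse, I would work with the signed measure (resp.\ continuous function) $\mu=S_p(K,\cdot)-S_p(-K,\cdot)$ (resp.\ $g(v)=\rho(L,v)^{n+p}-\rho(-L,v)^{n+p}$), since $h(\Pi_p^\pm K,\cdot)^p=\mathrm{C}_p^+S_p(\pm K,\cdot)$ (resp.\ $h(\mathrm{M}_p^\pm L,\cdot)^p=\mathrm{C}_p^+\rho(\pm L,\cdot)^{n+p}$) turns the hypothesis into $\mathrm{C}_p^+\mu=0$ (resp.\ $\mathrm{C}_p^+g=0$).

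The key structural observation is that $\mu$ and $g$ are \emph{odd} under the antipodal map $v\mapsto -v$: using $S_p(-K,\omega)=S_p(K,-\omega)$ and $\rho(-L,v)=\rho(L,-v)$, one checks that $\int f(-v)\,d\mu(v)=-\int f(v)\,d\mu(v)$ for all continuous $f$, and likewise for $g$. Since even-order spherical harmonics satisfy $Y_k(-v)=Y_k(v)$, this forces $\pi_k\mu=0$ and $\pi_kg=0$ for every even $k$. Now the Funk--Hecke identity (\ref{mult}) applied to $\mathrm{C}_p^+\mu=0$ yields $a_k[\mathrm{C}_p^+]\pi_k\mu=0$ for all $k$, so it suffices to show $a_k[\mathrm{C}_p^+]\ne 0$ for all \emph{odd} $k$. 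By (\ref{pnotint}) and (\ref{pint}), the only possible vanishing multipliers occur at $k\in\{p+2,p+4,\dots\}$, all of which have the parity of $p$; since $p$ is not an odd integer, all vanishing multipliers (if any) sit at even $k$, so $a_k[\mathrm{C}_p^+]\ne 0$ for every odd $k$. Combining, $\pi_k\mu=0$ for every $k$, so $\mu=0$ by (\ref{mudet}); the argument for $g$ is identical, using (\ref{funcdet}).

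It remains to deduce origin-symmetry from $\mu=0$ and $g=0$. For $\mu$, the relation $S_p(K,\cdot)=S_p(-K,\cdot)$ combined with the uniqueness property of $L_p$ surface area measures recorded in Section~2 yields $K=-K$ directly when $p\ne n$; when $p=n$ the uniqueness gives only $K=\lambda(-K)$ for some $\lambda>0$, but $V(K)=V(-K)$ then forces $\lambda=1$. For $g$, the identity $\rho(L,\cdot)^{n+p}=\rho(-L,\cdot)^{n+p}$ immediately gives $\rho(L,\cdot)=\rho(-L,\cdot)$, hence $L=-L$. The main conceptual obstacle is the parity-matching step in the middle paragraph: one must recognize that the antipodal oddness of $\mu$ and the parity pattern of the zero multipliers $a_k[\mathrm{C}_p^+]$ are complementary precisely when $p$ avoids the odd integers, which is why the hypothesis ``$p$ not an odd integer'' is exactly the right one.
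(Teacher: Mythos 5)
Your proof is correct and takes essentially the same route as the paper's: both arguments reduce the claim to the fact that the multipliers $a_k[\mathrm{C}_p^+]$ are non-zero for all odd $k$ precisely when $p$ is not an odd integer, combined with the uniqueness property of the $L_p$ surface area measure (resp.\ of the radial function). The only cosmetic difference is that you work with the antipodally odd difference $\mu=S_p(K,\cdot)-S_p(-K,\cdot)$ and show $\pi_k\mu=0$ for all $k$, whereas the paper argues directly that the evenness of $h(\Pi_p^+K,\cdot)^p$ forces $S_p(K,\cdot)$ to be even; your explicit treatment of the $p=n$ case of the uniqueness statement is a small point the paper leaves implicit.
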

\begin{proof} From the definition of $\Pi_p^-$ and $\mathrm{M}_p^-$,
it follows that $\Pi_p^+K=\Pi_p^-K$ and
$\mathrm{M}_p^+L=\mathrm{M}_p^-L$ for origin-symmetric bodies $K$
and $L$ .

Conversely, assume that $\Pi_p^+K=\Pi_p^-K$. Then $\Pi_p^+K$ is
origin-symmetric, i.e., $h(\Pi_p^+K,\cdot)^p$ is even. Note that
$f \in C(S^{n-1})$ (or a measure $\mu$ on $S^{n-1}$) is even if
and only if $\pi_k f=0$ (or $\pi_k \mu$ = 0, respectively) for
every odd $k \in \mathbb{N}$.

Since $\mathrm{C}_p^+$ is a multiplier transformation, we obtain
from (\ref{pnotint}) and (\ref{pint}) that $S_p(K,\cdot)$ is even.
Thus, by the uniqueness property of $S_p(K\cdot)$, the body $K$
must be origin-symmetric.

The case $\mathrm{M}_p^+L=\mathrm{M}_p^-L$ is similar, using
$\rho(L,\cdot)^{n+p}$ instead of $S_p(K,\cdot)$. \phantom{------}
\end{proof}

\section{Class reduction}

A standard method for establishing geometric inequalities is to
prove them first for a dense class of bodies (e.g, polytopes or
smooth bodies) and then, by taking the limit, the inequality is
obtained for all bodies. This approach has the major disadvantage
that critical equality conditions are usually lost for the limiting
case. In order to prove affine isoperimetric inequalities {\em
along} with their equality conditions for all convex bodies, it is
often sufficient to establish the inequalities only for a very
small class of bodies, e.g., the class of $L_p$ moment bodies.
This class reduction technique was introduced by Lutwak
\textbf{\cite{lutwak86}} and further applied in \textbf{\cite{LYZ2000}} and
\textbf{\cite{LZ1997}}.

The crucial result in this section, Lemma \ref{ineqgaphi}, shows
that in order to establish Theorem \ref{main1}, we need only
prove it for the class of smooth convex bodies (in fact the much
smaller class of $L_p$ moment bodies will suffice). The tools to
derive this fact are provided by Lemma \ref{durch} and the
following lemma.

\begin{lemma}\label{smooth}
If $K \in \mathcal{K}^n_{\mathrm{o}}$, then the convex body
$\mathrm{M}_p^{\tau} K$ is smooth.
\end{lemma}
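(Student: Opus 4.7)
The plan is to show that $H(u) := h(\mathrm{M}_p^{\tau} K,u)^p$ is $C^2$ and strictly positive on $\mathbb{R}^n\setminus\{0\}$, and that the Hessian of $h = H^{1/p}$, restricted to each tangent hyperplane $u^\perp$ at $u \in S^{n-1}$, is positive definite. Together these conditions are equivalent to $\mathrm{M}_p^{\tau} K$ being smooth in the sense of the paper. Polar coordinates turn (\ref{defmptau}) into
\[
H(u) = c_{n,p}(\tau)(n+p)\int_K \varphi_{\tau}(u\cdot x)^p\,dx,
\]
in analogy with (\ref{mpnormal}). Since $p>1$, the kernel $\varphi_{\tau}(t)^p = (1+\tau)^p t_+^p + (1-\tau)^p t_-^p$ is $C^1$ on $\mathbb{R}$ with locally integrable distributional second derivative $p(p-1)[(1+\tau)^p t_+^{p-2} + (1-\tau)^p t_-^{p-2}]$. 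A dominated-convergence argument (or, if preferred, a mollification of $\varphi_{\tau}^p$ followed by passage to the limit) combined with compactness of $K$ then yields $H \in C^2(\mathbb{R}^n\setminus\{0\})$, and $H>0$ because $K$ has nonempty interior, so $h = H^{1/p}$ is likewise $C^2$.

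The key step is positivity of $D^2 h(u)|_{u^\perp}$. Fix $u\in S^{n-1}$ and $v\in u^\perp\setminus\{0\}$, and set $g(x) := \varphi_{\tau}(u\cdot x)$ and $g'(x) := \varphi_{\tau}'(u\cdot x)(v\cdot x)$. Writing $h(u+tv) = C^{1/p}\bigl(\int_K \varphi_{\tau}((u+tv)\cdot x)^p\,dx\bigr)^{1/p}$ with $C := c_{n,p}(\tau)(n+p)$, and differentiating twice in $t$ at $t=0$ (using $\varphi_{\tau}'' = 0$ almost everywhere), one obtains
\[
D^2 h(u)[v,v] = (p-1)\,C^{1/p}\,N^{1-2p}\left[N^p \int_K g^{p-2}(g')^2\,dx - \left(\int_K g^{p-1}g'\,dx\right)^{2}\right],
\]
where $N := (\int_K g^p)^{1/p} > 0$. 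The bracket is non-negative by the Cauchy--Schwarz inequality applied to $g^{(p-2)/2}g'$ and $g^{p/2}$ in $L^2(K)$.

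It remains to check strictness. Equality in Cauchy--Schwarz would force $g'/g$ to be constant almost everywhere on $\{g>0\}$; since $\varphi_{\tau}'(t)/\varphi_{\tau}(t) = 1/t$ for $t \neq 0$, this would mean $(v - \lambda u)\cdot x = 0$ for almost every $x \in \{g>0\}\cap K$ and some $\lambda \in \mathbb{R}$. Because $K$ has nonempty interior and $\{g>0\}$ is open (a half-space if $|\tau|=1$, the complement of a hyperplane otherwise), this set contains a nonempty open subset of $\mathbb{R}^n$; hence $v = \lambda u$, and $v\in u^\perp$ then forces $v = 0$, a contradiction. Therefore $D^2 h(u)[v,v] > 0$, and $\mathrm{M}_p^{\tau} K$ is smooth. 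The principal technical obstacle of the plan is the $C^2$ regularity of $H$ when $1 < p < 2$, since $\varphi_{\tau}(t)^{p-2}$ is singular at $0$; once this regularity is secured, the Hessian positivity reduces to the strictness of a Cauchy--Schwarz inequality.
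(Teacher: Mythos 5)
Your proposal is correct and follows essentially the same route as the paper: differentiate the defining integral twice, and show the tangential Hessian of the support function is positive definite via the Cauchy--Schwarz/H\"older inequality, with the equality case excluded because $K$ has nonempty interior. The only organizational difference is that you treat all $\tau\in[-1,1]$ uniformly through the kernel $\varphi_\tau$, whereas the paper carries out the computation for $\tau=\pm1$ and defers general $\tau$ to a ``similar but more tedious'' calculation; you also rightly flag (and the paper glosses over) the $C^2$ regularity issue for $1<p<2$ coming from the integrable singularity of $\varphi_\tau(t)^{p-2}$ at $t=0$.
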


\begin{proof} In order to show that $\mathrm{M}_p^{\tau} K$ is smooth, we need to prove that its support function
$h:=h(\mathrm{M}_p^{\tau}K,\cdot)$ is of class $C^2$ and that the
convex body $\mathrm{M}_p^{\tau} K$ has everywhere positive radii
of curvature (see \textbf{\cite[\textnormal{p.\ 111}]{Schneider:CB}}). To this end, we
first assume that $\tau=1$, i.e., $h=h(\mathrm{M}_p^+K,\cdot)$.
Let $f$ be a continuous function on $\mathbb{R}^n$ and let $u \in
\mathbb{R}^n\backslash \{0\}$. A simple calculation shows that
\begin{equation}\label{diff}
\frac{\partial}{\partial u_i}\int_{K}(u\cdot
x)_+^pf(x)\,dx=p\int_{K}(u\cdot x)_+^{p-1}x_i f(x)\,dx.
\end{equation}
Thus, the function $h$ is of class $C^2$ if $\tau=1$. Let
$(h_{i\!j})_{i,j=1}^{n-1}$ denote the Hessian matrix of $h$ at
$u$ with respect to an orthonormal basis $\{e_1,\ldots,e_n\}$ of
$\mathbb{R}^n$ with $e_n=u$. By \textbf{\cite[\textnormal{Corollary
2.5.3}]{Schneider:CB}}, the convex body $\mathrm{M}_p^{\tau} K$ has
everywhere positive radii of curvature if and only if
\[\det (h_{i\!j}(u))_{i,j=1}^{n-1}>0.\]
Using (\ref{diff}), we obtain for $h_{i\!j}(u)$ up to some
positive constant
\begin{eqnarray*}
\int_K(x\cdot u)_+^p\,dx\int_K(x\cdot u)_+^{p-2}(x\cdot
b_i)(x\cdot b_j)\,dx\hspace{3cm}&&\\
-\int_K(x\cdot u)_+^{p-1}(x\cdot
b_i)\,dx\int_K(x\cdot u)_+^{p-1}(x\cdot b_j)\,dx.&&
\end{eqnarray*}
An application of H\"older's inequality shows that
$(h_{i\!j})_{i,j=1}^{n-1}$ is positive definite and thus, in
particular, $\det(h_{i\!j}(u))_{i,j=1}^{n-1}>0$. Hence,
$\mathrm{M}_p^+ K$ is smooth and, since $\mathrm{M}_p^-
K=-\mathrm{M}_p^+K$, we also obtain that $\mathrm{M}_p^-K$ is
smooth. For $\tau\in(-1,1)$, the assertion follows from a similar
(but more tedious) calculation, by using (\ref{mtaupm}) and
(\ref{diff}).
\end{proof}

The crucial result of this section is contained in the following
lemma which reduces the proof of Theorem \ref{main1} to the class
of smooth convex bodies.

\begin{lemma} \label{ineqgaphi} In order to prove Theorem \ref{main1}, it is
sufficient to verify the following assertion: If $K \in
\mathcal{K}^n_{\mathrm{o}}$ is smooth, then for every
$\tau\in[-1,1]$,
\begin{equation*}
V(K)^{n/p-1}V(\Pi_p^{\tau,*} K) \leq V(B)^{n/p},
\end{equation*}
with equality if and only if $K$ is an ellipsoid centered at the
origin.
\end{lemma}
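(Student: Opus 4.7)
The plan is first to use (\ref{piconst}) to reduce Theorem \ref{main1} to an inequality involving only the one-parameter family $\Pi_p^{\tau}$. Writing $\Phi_p K = c\,\Pi_p^{\tau}K$ with $c > 0$ and $\tau \in [-1,1]$, the normalization $\Pi_p^{\tau}B = B$ gives $\Phi_p B = cB$, so that $V(\Phi_p^* K) = c^{-n}V(\Pi_p^{\tau,*}K)$ and $V(\Phi_p^* B) = c^{-n}V(B)$. The common factor $c^{-n}$ cancels on both sides of Theorem \ref{main1}, reducing it to the assertion that for every $K \in \mathcal{K}^n_{\mathrm{o}}$ and every $\tau \in [-1,1]$,
\[V(K)^{n/p-1}V(\Pi_p^{\tau,*}K) \leq V(B)^{n/p},\]
with equality precisely when $K$ is a centered ellipsoid. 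The task thus becomes to deduce this inequality for all $K$ from its validity for smooth $K$.

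The device is to introduce the auxiliary body $\bar K := \mathrm{M}_p^{\tau}\Pi_p^{\tau,*}K$, which is smooth by Lemma \ref{smooth}. Applying Lemma \ref{durch} twice with $L = \Pi_p^{\tau,*}K$, once to $K$ and once to $\bar K$, and using (\ref{vol1}) and (\ref{vol2}), one obtains
\[V(\Pi_p^{\tau,*}K) = V_p(K,\bar K), \qquad V(\bar K) = \widetilde V_{-p}(\Pi_p^{\tau,*}K,\Pi_p^{\tau,*}\bar K).\]
The $L_p$ Minkowski inequality (\ref{minkin}) applied to the first identity and its dual (\ref{dminkin}) applied to the second then yield respectively
\[V(\Pi_p^{\tau,*}K)^n \geq V(K)^{n-p}V(\bar K)^p, \qquad V(\bar K)^n \geq V(\Pi_p^{\tau,*}K)^{n+p}V(\Pi_p^{\tau,*}\bar K)^{-p}.\]

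Since $\bar K$ is smooth, the hypothesis of the lemma furnishes $V(\bar K)^{n/p-1}V(\Pi_p^{\tau,*}\bar K) \leq V(B)^{n/p}$. Using the second displayed inequality to eliminate $V(\Pi_p^{\tau,*}\bar K)$ gives $V(\Pi_p^{\tau,*}K)^{(n+p)/p} \leq V(B)^{n/p}V(\bar K)$, and inserting the bound $V(\bar K) \leq V(K)^{(p-n)/p}V(\Pi_p^{\tau,*}K)^{n/p}$ from the first displayed inequality yields, after canceling the factor $V(\Pi_p^{\tau,*}K)^{n/p}$, exactly $V(K)^{n/p-1}V(\Pi_p^{\tau,*}K) \leq V(B)^{n/p}$.

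For the equality case, equality in (\ref{minkin}) forces $K$ and $\bar K$ to be dilates, equality in (\ref{dminkin}) forces $\Pi_p^{\tau,*}K$ and $\Pi_p^{\tau,*}\bar K$ to be dilates, and equality in the smooth-case inequality forces $\bar K$ to be a centered ellipsoid; chaining these gives that $K$ itself is a centered ellipsoid. The converse follows from the $\mathrm{SL}(n)$ contravariance and homogeneity of $\Pi_p^{\tau}$ together with the trivial equality at $K=B$. The only delicate point is the exponent bookkeeping; all geometric content is already packaged into Lemmas \ref{durch} and \ref{smooth} and the two Minkowski-type inequalities, so once the right auxiliary body $\bar K$ is chosen the argument essentially writes itself.
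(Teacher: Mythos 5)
Your proof is correct and follows essentially the same route as the paper's: reduce via (\ref{piconst}) to multiples of $\Pi_p^{\tau}$, introduce the smooth auxiliary body $\bar K = \mathrm{M}_p^{\tau}\Pi_p^{\tau,*}K$, and combine Lemma~\ref{durch} with the $L_p$ Minkowski inequality (\ref{minkin}) and its dual (\ref{dminkin}) to chain the smooth-case bound through to arbitrary $K$. The only cosmetic difference is that you splice the two intermediate inequalities directly against the smooth hypothesis, whereas the paper first packages them as the self-contained inequality (\ref{ppineq}) before invoking Lemma~\ref{smooth}; the substance is identical.
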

\begin{proof} For $K \in \mathcal{K}^n_{\mathrm{o}}$, let $\Phi_p
K=c_1\cdot\Pi_p^+K+_pc_2\cdot\Pi_p^-K$, where $c_1, c_2 \geq 0$
are not both zero. By (\ref{piconst}), there exist a $\tau \in
[-1,1]$ and a constant $c > 0$ such that $\Phi_p K =c\,\Pi_p^{\tau}
K.$ Since $\Pi_p^{\tau}B=B$, we conclude, that the assertion of
Theorem \ref{main1} is equivalent to the following statement: If
$K \in \mathcal{K}^n_{\mathrm{o}}$, then for every
$\tau\in[-1,1]$,
\begin{equation} \label{ipi}
V(K)^{n/p-1}V(\Pi_p^{\tau,*} K) \leq V(B)^{n/p},
\end{equation}
with equality if and only if $K$ is an ellipsoid centered at the
origin.

It remains to show that inequality (\ref{ipi}) along with its
equality conditions holds if and only if it holds for smooth
bodies. To this end, we will prove that, for $K \in
\mathcal{K}^n_{\mathrm{o}}$,
\begin{equation} \label{ppineq}
V(K)^{n/p-1}V(\Pi_p^{\tau,*} K) \leq
V(\mathrm{M}_p^{\tau}\Pi_p^{\tau,*}K)^{n/p-1}V(\Pi_p^{\tau,*}
\mathrm{M}_p^{\tau}\Pi_p^{\tau,*}K),
\end{equation}
with equality if and only if $K$ and
$\mathrm{M}_p^{\tau}\Pi_p^{\tau,*}K$ are dilates. Thus, by Lemma
\ref{smooth}, any convex body at which $
V(K)^{n/p-1}V(\Pi_p^{\tau,*} K)$ attains a maximum must be smooth.

In order to see (\ref{ppineq}), take $L=\Pi_p^{\tau,*} K$ in
Lemma \ref{durch} and use (\ref{vol2}) to conclude
\begin{equation*}
V(\Pi_p^{\tau,*} K)= V_{p}(K,\mathrm{M}_p^{\tau} \Pi_p^{\tau,*}K).
\end{equation*}
Thus, by the $L_p$ Minkowski inequality (\ref{minkin}), we obtain
\begin{equation}\label{i2}
V(\Pi_p^{\tau,*} K)^n \geq V(K)^{n-p} V(\mathrm{M}_p^{\tau}
\Pi_p^{\tau,*} K)^{p},
\end{equation}
with equality if and only if $K$ and
$\mathrm{M}_p^{\tau}\Pi_p^{\tau,*} K$ are dilates. Conversely,
replace $K$ by $\mathrm{M}_p^{\tau} L$, for some star body $L$,
in Lemma \ref{durch} and use (\ref{vol1}) to obtain
\begin{equation*}
V(\mathrm{M}_p^{\tau} L)= \widetilde{V}_{-p}(L,\Pi_p^{\tau,*}
\mathrm{M}_p^{\tau} L).
\end{equation*}
Thus, the dual $L_p$ Minkowski inequality (\ref{dminkin}) yields
\begin{equation}\label{i0}
V(\mathrm{M}_p^{\tau}L)^n \geq V(L)^{n+p}V(\Pi_p^{\tau,*}
\mathrm{M}_p^{\tau} L)^{-p},
\end{equation}
with equality if and only if $L$ and
$\Pi_p^{\tau,*}\mathrm{M}_p^{\tau} L$ are dilates.

Now take $L=\Pi_p^{\tau,*} K$ in (\ref{i0}) to get
\begin{equation}\label{i1}
V(\mathrm{M}_p^{\tau}\Pi_p^{\tau,*}K)^{n} \geq
V(\Pi_p^{\tau,*}K)^{n+p} V(\Pi_p^{\tau,*}
\mathrm{M}_p^{\tau}\Pi_p^{\tau,*} K)^{-p},
\end{equation}
with equality if and only if $\Pi_p^{\tau,*} K$ and
$\Pi_p^{\tau,*} \mathrm{M}_p^{\tau} \Pi_p^{\tau,*} K$ are dilates.

A combination of inequalities (\ref{i2}) and (\ref{i1}) finally
yields (\ref{ppineq}) and finishes the proof.
\end{proof}

By (\ref{extension}), the case $p = 1$ of inequality (\ref{ipi})
reduces to the classical Petty projection inequality. Since we do
not wish to reprove this classical inequality, we note again that
we restrict our attention to the case $1 < p < \infty$.

In Section 6, we will again use the class reduction technique to
show that Theorem \ref{main2} follows from Theorem \ref{main1}.

\section{Steiner Symmetrization and $\Pi_p^{\tau,*}$}

In this section we establish the important fact that Steiner
symmetrization intertwines with the operator $\Pi_p^{\tau,*}$ for
every $\tau \in [-1,1]$. This was proved in \textbf{\cite{LYZ2000}} for
the case $\tau=0$. For arbitrary $\tau \in [-1,1]$, the proof is
similar but \nolinebreak certain modifications are needed to
settle the equality conditions in Theorem \ref{main1}.

In the following let $\{e_1, \ldots ,e_n\}$ be an orthonormal
basis of $\mathbb{R}^n$. We will frequently use the decomposition
$\mathbb{R}^n = \mathbb{R}^{n-1} \times \mathbb{R}$, where we
assume that $e_n^{\bot} = \mathbb{R}^{n-1}$. Clearly, for every
convex body $K \in \mathcal{K}^n_{\mathrm{o}}$ there exist
functions $\underline{z}, \overline{z}: K|e_n^{\bot} \rightarrow
\mathbb{R}$ such that $K$ can be represented in the form
\begin{equation} \label{rep7}
K=\{(x,t) \in \mathbb{R}^{n-1} \times \mathbb{R}:
\underline{z}(x) \leq t \leq \overline{z}(x), x \in
K|e_n^{\bot}\}.
\end{equation}
Note that the number $\overline{z}-\underline{z}$ is the length
of the chord of $K$ through $x$ parallel to $e_n$. It is easy to
verify that $\underline{z}$ is convex and that $\overline{z}$ is
a concave function. Thus, $\underline{z}$ and $\overline{z}$ are
continuous on $K_{\mathrm{o}} := \mbox{relint}\, K|e_n^{\bot}$.
If $K$ is smooth, then $\underline{z}$ and $\overline{z}$ are
$C^1$ functions on $K_{\mathrm{o}}$.

Let $D \subseteq \mathbb{R}^{n-1}$ be an open convex set which
contains the origin in its interior. For a $C^1$ function $z: D
\rightarrow \mathbb{R}$ define
\[\langle z \rangle(x)=z(x)-x\cdot\nabla z(x), \qquad x \in D.\]
Note that the operator $\langle \cdot \rangle$ is linear.
Moreover, the kernel of $\langle \cdot \rangle$ consists only of
linear functions:
\begin{equation}\label{kernel}
\langle z \rangle(x)=0 \,\, \mbox{ for all } x \in D \qquad
\Longrightarrow \qquad z \mbox{ is linear on } D.
\end{equation}

The following auxiliary result can be found in \textbf{\cite[}\textnormal{Lemma
11}]{LYZ2000}}.

\begin{lemma} \label{supfrep} If $K \in \mathcal{K}^n_{\mathrm{o}}$ is a smooth
convex body given by
\begin{equation*}
K = \{(x,t) \in \mathbb{R}^{n-1} \times \mathbb{R}:
\underline{z}(x)\leq t\leq \overline{z}(x),\,\, x \in
K|e_n^{\bot}\},
\end{equation*}
then for every $x \in \mathrm{relint}\, K|e_n^{\bot}$,
\[h(K,(\nabla \underline{z}(x),-1))=\langle -\underline{z}
\rangle(x) \qquad \mbox{and} \qquad h(K,(-\nabla
\overline{z}(x),1))=\langle \overline{z} \rangle(x).\]
\end{lemma}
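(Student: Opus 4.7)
The plan is to exploit that smoothness of $K$ guarantees a unique outer unit normal at each boundary point, which lets us identify explicitly where the supremum defining the support function is attained.

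First I would describe the boundary of $K$ near a point lying above $x\in\mathrm{relint}\,K|e_n^\perp$. The representation of $K$ in \eqref{rep7} shows that the lower portion of $\partial K$ above $K_{\mathrm{o}}$ is the graph of the convex $C^1$ function $\underline{z}$, while the upper portion is the graph of the concave $C^1$ function $\overline{z}$. Consider the defining function $F(y,t)=\underline{z}(y)-t$ for the lower part, which is nonpositive exactly on $K$ near $(x,\underline{z}(x))$. Since $F$ is $C^1$ and $\nabla F(x,\underline{z}(x))=(\nabla\underline{z}(x),-1)\neq 0$, this vector is an outer normal to $K$ at $(x,\underline{z}(x))$. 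Similarly, using $G(y,t)=t-\overline{z}(y)$ for the upper part, one sees that $(-\nabla\overline{z}(x),1)$ is an outer normal at $(x,\overline{z}(x))$.

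Next I would use the elementary fact that for a convex body $K$, if $v$ is an outer normal to $K$ at the boundary point $y_0$, then $h(K,v)=v\cdot y_0$ (because $y_0$ maximizes $v\cdot y$ over $y\in K$). Applying this to the two normals identified above gives
\[
h(K,(\nabla\underline{z}(x),-1))=(\nabla\underline{z}(x),-1)\cdot(x,\underline{z}(x))=x\cdot\nabla\underline{z}(x)-\underline{z}(x),
\]
\[
h(K,(-\nabla\overline{z}(x),1))=(-\nabla\overline{z}(x),1)\cdot(x,\overline{z}(x))=\overline{z}(x)-x\cdot\nabla\overline{z}(x).
\]
Recognising the right-hand sides as $\langle -\underline{z}\rangle(x)$ (by linearity of $\langle\cdot\rangle$) and $\langle \overline{z}\rangle(x)$ respectively yields the two stated identities.

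The only potentially delicate point is justifying that $(\nabla\underline{z}(x),-1)$ is an \emph{outer} (not inner) normal. This is where convexity of $\underline{z}$ and the fact that $\underline{z}$ parametrises the \emph{lower} boundary enter: since $t\geq\underline{z}(y)$ on $K$, the body lies above the graph, so the downward-pointing gradient of $F=\underline{z}-t$ (which points away from $K$ in the $e_n$ direction) is indeed exterior; an analogous sign check handles $\overline{z}$. Since $x\in\mathrm{relint}\,K|e_n^\perp$, no boundary complications arise, and $C^1$ regularity of $\underline{z},\overline{z}$ (ensured by smoothness of $K$) makes the gradients well defined.
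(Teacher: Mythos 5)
Your proof is correct. Note that the paper itself gives no proof of this lemma --- it is quoted from Lutwak--Yang--Zhang \cite{LYZ2000} --- so there is nothing to compare against except the standard argument, which is exactly what you give: identify $(\nabla\underline{z}(x),-1)$ and $(-\nabla\overline{z}(x),1)$ as outer normals at the boundary points $(x,\underline{z}(x))$ and $(x,\overline{z}(x))$, and evaluate the support function there. One small remark: the defining-function computation only exhibits a \emph{local} normal, and for the identity $h(K,v)=v\cdot y_0$ you need $v$ to be a \emph{global} supporting direction. This is immediate here, but the cleanest way to see it is the gradient inequality: convexity of $\underline{z}$ gives $\underline{z}(y)\geq\underline{z}(x)+\nabla\underline{z}(x)\cdot(y-x)$ for all $y\in K|e_n^{\bot}$, whence for every $(y,t)\in K$ one has $\nabla\underline{z}(x)\cdot y-t\leq x\cdot\nabla\underline{z}(x)-\underline{z}(x)$ with equality at $(x,\underline{z}(x))$; the concave case is symmetric. (Alternatively, for convex bodies any local supporting hyperplane is automatically global.) Your sign discussion covers this in spirit, so the argument stands.
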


Recall that for smooth $K \in \mathcal{K}^n_{\mathrm{o}}$, the
surface area measure $S(K,\cdot)$ and thus, by (\ref{absc}), also
the $L_p$ surface area measure $S_p(K,\cdot)$ are absolutely
continuous with respect to spherical Lebesgue measure:
\begin{equation} \label{absc2}
dS_p(K,u)=h(K,u)^{1-p}f(K,u)\,du, \qquad u \in S^{n-1}.
\end{equation}
Here $f(K,\cdot)$ is the curvature function of the smooth convex
body $K$.

For smooth $K \in \mathcal{K}^n_{\mathrm{o}}$, the {\it spherical
image map} $\nu: \mathrm{bd}\,K \rightarrow S^{n-1}$ is defined
by letting $\nu(x)$, for $x \in \mathrm{bd}\,K$, be the unique
outer unit normal vector of $K$ at $x$. By \textbf{\cite[\textnormal{p.\
112}]{Schneider:CB}}, for any integrable function $g$ on $S^{n-1}$
we have
\[\int_{S^{n-1}} g(u)f(K,u)\,du = \int_{\mathrm{bd}\,K} g(\nu(x))\,d\mathcal{H}^{n-1}(x),  \]
where $\mathcal{H}^{n-1}$ denotes $(n-1)$-dimensional Hausdorff
measure. Thus, by (\ref{piptau}) and (\ref{absc2}), we obtain the
following representation of $\Pi_p^{\tau} K$, $\tau \in [-1,1]$:
\begin{equation} \label{rep6}
h(\Pi_p^{\tau} K,u)^p=c_{n,p}(\tau)\int_{\mathrm{bd}\,
K}\varphi_{\tau}(u \cdot
\nu(x))^ph(K,\nu(x))^{1-p}\,d\mathcal{H}^{n-1}(x).
\end{equation}
If the smooth convex body $K$ is given by (\ref{rep7}), then for
any continuous function $h$ on $S^{n-1}$,
\begin{eqnarray*}
\int_{\mathrm{bd} K} h(\nu(x))\,d\mathcal{H}^{n-1}(x)\hspace{8cm}&&\\
=\int_{K_{\mathrm{o}}} h(\nu(x,\underline{z}(x)))\sqrt{1+\|\nabla \underline{z}(x)\|^2}
+  h(\nu(x,\overline{z}(x)))\sqrt{1+\|\nabla
\overline{z}(x)\|^2}\,dx.&&
\end{eqnarray*}
Recall that
$K_\mathrm{o}=\mathrm{relint}\, K|e_n^{\bot}$. Since for any $x
\in K_\mathrm{o}$,
\[\nu(x,\underline{z}(x))=\frac{(\nabla\underline{z}(x),-1)}{\sqrt{1+\|\nabla\underline{z}(x)\|^2}}
\qquad \mbox{and} \qquad
\nu(x,\overline{z}(x))=\frac{(-\nabla\overline{z}(x),1)}{\sqrt{1+\|\nabla\overline{z}(x)\|^2}},\]
we obtain from Lemma \ref{supfrep}, (\ref{rep6}), and the
homogeneity of $h(K,\cdot)$ and $\varphi_{\tau}$:

\begin{lemma}\label{finrep} If $K \in \mathcal{K}^n_{\mathrm{o}}$ is a smooth
convex body given by
\begin{equation*}
K = \{(x,t) \in \mathbb{R}^{n-1} \times \mathbb{R}:
\underline{z}(x)\leq t\leq \overline{z}(x),\,\, x \in
K|e_n^{\bot}\},
\end{equation*}
then for every $(y,t) \in \mathbb{R}^{n-1} \times \mathbb{R}$,
\begin{eqnarray*}c_{n,p}^{-1}(\tau)h(\Pi_p^{\tau} K,(y,t))^p\hspace{8cm}\\
=\int_{K_\mathrm{o}}
\varphi_{\tau}(t-y\cdot\nabla\overline{z}(x))^p \langle
\overline{z} \rangle(x)^{1-p} + \varphi_{\tau}(y \cdot
\nabla\underline{z}(x)-t)^p\langle-\underline{z}\rangle(x)^{1-p}\,dx.&&
\end{eqnarray*}
\end{lemma}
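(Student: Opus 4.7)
The plan is to directly combine the formula \eqref{rep6} for $h(\Pi_p^\tau K, \cdot)^p$, the boundary integration formula displayed just before Lemma \ref{finrep}, and the support function identities from Lemma \ref{supfrep}. Both sides of the desired equality are positively $p$-homogeneous in $(y,t)$ (the left side since $h(\Pi_p^\tau K,\cdot)$ is $1$-homogeneous, and the right side since $\varphi_\tau$ is positively $1$-homogeneous), so it suffices to verify the identity for unit vectors, in which case \eqref{rep6} applies verbatim.

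First I would apply the boundary integration formula to the integrand $g(\nu) = \varphi_\tau((y,t)\cdot \nu)^p \, h(K,\nu)^{1-p}$ appearing in \eqref{rep6}, splitting the boundary integral over $\mathrm{bd}\, K$ into its upper and lower cap contributions via the graphs of $\overline{z}$ and $\underline{z}$ over $K_{\mathrm{o}}$. This produces a sum of two integrals over $K_{\mathrm{o}}$, each weighted by $\sqrt{1+\|\nabla \overline{z}(x)\|^2}$ or $\sqrt{1+\|\nabla \underline{z}(x)\|^2}$.

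Next I would substitute the explicit formulas
\[\nu(x,\overline{z}(x)) = \frac{(-\nabla \overline{z}(x),1)}{\sqrt{1+\|\nabla \overline{z}(x)\|^2}}, \qquad \nu(x,\underline{z}(x)) = \frac{(\nabla \underline{z}(x),-1)}{\sqrt{1+\|\nabla \underline{z}(x)\|^2}},\]
and exploit the positive homogeneity of degree $1$ of both $\varphi_\tau$ (since $\varphi_\tau(\lambda s) = \lambda\varphi_\tau(s)$ for $\lambda>0$) and $h(K,\cdot)$. In the upper cap integrand this yields a factor $(1+\|\nabla \overline{z}\|^2)^{-p/2}$ from $\varphi_\tau((y,t)\cdot\nu)^p$, a factor $(1+\|\nabla \overline{z}\|^2)^{(p-1)/2}$ from $h(K,\nu)^{1-p}$, and a factor $(1+\|\nabla \overline{z}\|^2)^{1/2}$ from the Jacobian; these three exponents sum to zero, so the square-root weights cancel exactly. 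The same cancellation occurs on the lower cap.

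Finally I would invoke Lemma \ref{supfrep} to rewrite $h(K,(-\nabla \overline{z}(x),1))^{1-p} = \langle \overline{z} \rangle(x)^{1-p}$ and $h(K,(\nabla \underline{z}(x),-1))^{1-p} = \langle -\underline{z} \rangle(x)^{1-p}$, producing exactly the two terms claimed. There is no real obstacle here; the only point to keep track of is the bookkeeping of homogeneity exponents so that all three $\sqrt{1+\|\nabla z\|^2}$-factors cancel, and that the arguments of $\varphi_\tau$ come out with the correct signs dictated by the direction of the outer normals on the upper and lower caps.
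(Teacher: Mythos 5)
Your proposal is correct and is essentially identical to the paper's own (very compressed) derivation, which likewise obtains Lemma \ref{finrep} from Lemma \ref{supfrep}, the representation (\ref{rep6}), the splitting of the boundary integral into the graphs of $\overline{z}$ and $\underline{z}$, and the homogeneity of $h(K,\cdot)$ and $\varphi_{\tau}$. Your explicit check that the three exponents $-p/2$, $(p-1)/2$, and $1/2$ of $1+\|\nabla z\|^2$ sum to zero is exactly the bookkeeping the paper leaves to the reader.
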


For $K \in \mathcal{K}^n_{\mathrm{o}}$ and $u \in S^{n-1}$, we
denote by $S_u K$ the Steiner symmetral of $K$ with respect to
the hyperplane $u^{\bot}$, c.f. \textbf{\cite[\textnormal{p.\ 30}]{gardner2ed}}. If $K$
is given by (\ref{rep7}), then
\[
S_{e_n}K=\{(x,t)\in \mathbb{R}^{n-1}\times
\mathbb{R}:\,\,\mbox{$\frac{1}{2}$}(\underline{z}-\overline{z})(x)\leq
t\leq \mbox{$\frac{1}{2}$}(\overline{z}-\underline{z})(x),\,\, x
\in K|e_n^{\bot}\}.
\]
Our next result forms the critical part of the proof of Theorem
\ref{main1}:

\begin{lemma} \label{st} If $K \in \mathcal{K}_{\mathrm{o}}^n$ is smooth, then for every $u \in
S^{n-1}$,
\begin{equation*}
S_u\Pi_p^{\tau,*} K \subseteq \Pi_p^{\tau,*} S_uK.
\end{equation*}
If equality holds in the above inclusion, there exists an $r\in
[0,1]$ such that the points which divide the (directed) chords of
$K$ parallel to $u$ in the proportion $r:1-r$ are coplanar.
\end{lemma}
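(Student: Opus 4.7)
Assume $u = e_n$, and represent $K$ as in (\ref{rep7}) with chord functions $\underline{z}, \overline{z}$, so that $S_u K$ corresponds to $\overline{z}_s = -\underline{z}_s = \tfrac{1}{2}(\overline{z}-\underline{z})$. A point $(y,t) \in S_u\Pi_p^{\tau,*}K$ exactly when there exist $a \leq b$ with $b - a \geq 2|t|$ and $(y,a),(y,b) \in \Pi_p^{\tau,*}K$. A direct inspection of the formula of Lemma \ref{finrep} applied to $S_u K$ shows that $h(\Pi_p^{\tau}S_u K,(y,\cdot))$ is an even function of $t$ --- the two $\varphi_\tau$-summands in the integrand simply interchange under $t \mapsto -t$, since $\overline{z}_s = -\underline{z}_s$. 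Hence $\Pi_p^{\tau,*}S_u K$ is symmetric about $e_n^{\bot}$, and by convexity it suffices to show $h(\Pi_p^{\tau} S_u K,(y,(b-a)/2)) \leq 1$ whenever $(y,a),(y,b)$ are the extreme points of the $e_n$-chord of $\Pi_p^{\tau,*}K$ through~$y$.

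\textbf{Key estimate.} Applying Lemma \ref{finrep} to both $K$ and $S_u K$, and writing $A = y\cdot\nabla\overline{z}$, $B = y\cdot\nabla\underline{z}$, $\alpha = \langle\overline{z}\rangle$, $\beta = \langle-\underline{z}\rangle$ (so that $y\cdot\nabla\overline{z}_s = (A-B)/2$ and $\langle\overline{z}_s\rangle = \langle-\underline{z}_s\rangle = (\alpha+\beta)/2$), and setting $u_1 = a-A$, $v_1 = B-a$, $u_2 = b-A$, $v_2 = B-b$, the required inclusion reduces to the pointwise estimate on $K|e_n^{\bot}$
\begin{multline*}
\tfrac{1}{2}\bigl[\varphi_{\tau}(u_1)^p \alpha^{1-p} + \varphi_{\tau}(v_2)^p \beta^{1-p}\bigr] + \tfrac{1}{2}\bigl[\varphi_{\tau}(u_2)^p \alpha^{1-p} + \varphi_{\tau}(v_1)^p \beta^{1-p}\bigr] \\
\geq \Bigl[\varphi_{\tau}\bigl(\tfrac{u_1+v_2}{2}\bigr)^p + \varphi_{\tau}\bigl(\tfrac{u_2+v_1}{2}\bigr)^p\Bigr]\bigl(\tfrac{\alpha+\beta}{2}\bigr)^{1-p},
\end{multline*}
after which integration and the bounds $h(\Pi_p^\tau K,(y,a))\le 1$, $h(\Pi_p^\tau K,(y,b))\le 1$ finish the inclusion. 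Each bracket on the left handles one of the $\varphi_\tau$-terms on the right via the elementary inequality $\tfrac{1}{2}(s_1^p r_1^{1-p} + s_2^p r_2^{1-p}) \geq \bigl(\tfrac{s_1+s_2}{2}\bigr)^p\bigl(\tfrac{r_1+r_2}{2}\bigr)^{1-p}$ (valid for $s_i, r_i > 0$ and $p \geq 1$, as a consequence of the joint convexity of the perspective function $(s,r)\mapsto s^p r^{1-p}$), together with the convexity of $\varphi_\tau$ and the monotonicity of $t\mapsto t^p$ on $[0,\infty)$ applied to pass from $\varphi_\tau(s_1) + \varphi_\tau(s_2)$ to $2\varphi_\tau((s_1+s_2)/2)$.

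\textbf{Equality and main obstacle.} If the inclusion is an equality, then the above pointwise estimate is an equality for a.e.\ $x \in K|e_n^\bot$ at every relevant chord direction $y$. The perspective-convexity equality forces proportionalities of the form $\varphi_\tau(u_i)\beta = \varphi_\tau(v_j)\alpha$, while equality in the convexity of $\varphi_\tau$ (which is affine on each of the half-lines $(-\infty,0]$ and $[0,\infty)$ and has a single kink at $0$) forces the arguments $u_i,v_j$ to lie on a common side of $0$. A case analysis on the resulting sign pattern, combined with (\ref{kernel}), yields an $r = r(\tau) \in [0,1]$ for which $(1-r)\underline{z} + r\overline{z}$ is an affine function on $K|e_n^\bot$, which is exactly the stated coplanarity of the points dividing the $e_n$-chords of $K$ in the proportion $r:1-r$. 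The delicate part of the argument is precisely this equality bookkeeping: the perspective function $s^p r^{1-p}$ is $1$-homogeneous and hence has a radial null direction of convexity, and $\varphi_\tau$ is only piecewise linear, so the correct value of~$r$ must be extracted from sign-dependent proportionalities while verifying that the ratio is independent of the chord direction $y$.
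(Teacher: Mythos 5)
Your proposal follows essentially the same route as the paper's proof: reduce to $u=e_n$, pass to the boundary representation of Lemma \ref{finrep}, and combine the subadditivity and positive homogeneity of $\varphi_\tau$ with the inequality $(a+b)^p(c+d)^{1-p}\le a^pc^{1-p}+b^pd^{1-p}$ (your ``perspective function'' inequality is exactly this, and your regrouped pointwise estimate is the paper's chain of inequalities), so the inclusion part is correct. The one place you stop short is the equality discussion, which you leave as a sketch and flag as delicate: the worry about verifying that the ratio is ``independent of the chord direction $y$'' is a non-issue, since the statement only asks for the existence of a single $r$, and it suffices to evaluate the pointwise equality conditions at $y=0$. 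There they become $(|s|+\tau s)\langle-\underline{z}\rangle(x)=(|t|-\tau t)\langle\overline{z}\rangle(x)$ and $(|s|-\tau s)\langle\overline{z}\rangle(x)=(|t|+\tau t)\langle-\underline{z}\rangle(x)$ for the two boundary parameters $s\neq t$ of the $e_n$-chord of $\Pi_p^{\tau,*}K$ through the origin; since $s$ and $t$ lie on opposite sides of $0$, at least one pair of coefficients is positive, giving a single constant $c>0$ with $\langle\underline{z}+c\overline{z}\rangle=0$, whence $\underline{z}+c\overline{z}$ is linear by (\ref{kernel}) and $r=c/(c+1)$.
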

\begin{proof} Since $\Pi_p^{\tau,*}$ is linearly associating, we can assume without loss of generality that $u=e_n$.
 Let the convex body $K$ be given by
\[ K=\{(x,t)\in \mathbb{R}^{n-1}\times
\mathbb{R}:\,\underline{z}(x)\leq t\leq \overline{z}(x),\,\, x
\in K|e_n^{\bot} \}.\] The definition of Steiner symmetrization
and (\ref{suprad}) show that
\begin{equation}\label{steineren}
S_{e_n}\Pi_p^{\tau,*} K \subseteq \Pi_p^{\tau,*} S_{e_n}K
\end{equation}
holds if and only if
\[
h(\Pi_p^{\tau} K,(y,s))=h(\Pi_p^{\tau}K,(y,t))=1 \quad \mbox{with } s \neq t
\]
implies
\[ h(\Pi_p^{\tau}S_{e_n}K,(y,\mbox{$\frac{1}{2}$}s-\mbox{$\frac{1}{2}$}t)) \leq
1.\]

Let $(y,s),(y,t)\in \mathbb{R} ^{n-1}\times \mathbb{R}$ with
$s \neq t$ and suppose that
\[h(\Pi_p^{\tau} K,(y,s))=h(\Pi_p^{\tau} K,(y,t))=1.\]
Note that the Steiner symmetral of a smooth convex body is again
smooth. Since the triangle inequality implies
\[\varphi_{\tau}(\mbox{$\frac{1}{2}$}(s - t)-y \cdot \mbox{$\frac{1}{2}$} \nabla(\overline{z} - \underline{z})(x))
\leq \frac{1}{2} \left ( \varphi_{\tau}^{\phantom{/}}( s - y \cdot
\nabla \overline{z}(x)) + \varphi_{\tau}(y \cdot \nabla
\underline{z}(x)-t) \right )\] and
\[ \varphi_{\tau}(y \cdot \mbox{$\frac{1}{2}$} \nabla (\underline{z}-\overline{z})(x)-\mbox{$\frac{1}{2}$}(s-t))
\leq \frac{1}{2}\left ( \varphi_{\tau}^{\phantom{/}} (y \cdot
\nabla \underline{z}(x)-s) + \varphi_{\tau}(t- y \cdot \nabla
\overline{z}(x)) \right),\] we obtain from Lemma \ref{finrep} and
the linearity of the operator $\langle \cdot \rangle$,
\[\renewcommand{\arraystretch}{2.0}
\begin{array}{l}
c_{n,p}^{-1}(\tau)\,h(\Pi_p^{\tau} S_{e_n} K,(y,\mbox{$\frac{1}{2}$}s-\mbox{$\frac{1}{2}$} t))^p \\
 \displaystyle \phantom{--} = \int_{K_\mathrm{o}}
\varphi_{\tau}(\mbox{$\frac{1}{2}$}(s-t)-y\cdot
\mbox{$\frac{1}{2}$} \nabla (\overline{z}-\underline{z})(x))^p
\left\langle\mbox{$\frac{1}{2}$}(\overline{z}-\underline{z})\right\rangle(x)^{1-p}\,dx\\
\phantom{----} \displaystyle +
\int_{K_{\mathrm{o}}}\varphi_{\tau}(y \cdot \mbox{$\frac{1}{2}$}
\nabla(\underline{z}-\overline{z})(x)-\mbox{$\frac{1}{2}$}(s-t))^p
\left\langle\mbox{$\frac{1}{2}$}(\overline{z}-\underline{z})\right\rangle(x)^{1-p}\,dx  \\
\displaystyle \phantom{--} \leq \frac{1}{2} \int_{K_{\mathrm{o}}}
\left ( \varphi_{\tau}^{\phantom{/}}( s - y \cdot \nabla
\overline{z}(x))
+ \varphi_{\tau}^{\phantom{/}}( y \cdot \nabla \underline{z}(x)-t) \right )^p\left\langle\overline{z}-\underline{z}\right\rangle(x)^{1-p}(x)\,dx\\
\phantom{----} \displaystyle + \frac{1}{2} \int_{K_{\mathrm{o}}}
\left ( \varphi_{\tau}^{\phantom{/}}( y \cdot \nabla
\underline{z}(x)-s) + \varphi_{\tau}^{\phantom{/}}(t- y \cdot
\nabla \overline{z}(x)) \right )^p
\left\langle\overline{z}-\underline{z}\right\rangle(x)^{1-p}\,dx.
\phantom{-----}
\end{array} \]
By the convexity of the function $t \mapsto t^p$, it follows that
for real numbers $a,b \geq 0$ and $c,d > 0$,
\begin{equation}\label{numbers}
(a+b)^p(c+d)^{1-p}\leq a^pc^{1-p}+b^pd^{1-p},
\end{equation}
with equality if and only if $ad=bc$, see \textbf{\cite[\textnormal{Lemma 8}]{LYZ2000}}.
Since $K \in \mathcal{K}_{\mathrm{o}}^n$, Lemma \ref{supfrep}
implies that
$\langle\overline{z}\rangle(x),\langle-\underline{z}\rangle(x)>0$
for every $x\in K_{\mathrm{o}}$. Thus, we obtain the desired
inequality
\[ \renewcommand{\arraystretch}{2.0}
\begin{array}{l}
c_{n,p}^{-1}(\tau)h(\Pi_p^{\tau} S_{e_n}K,(y,\mbox{$\frac{1}{2}$}s-\mbox{$\frac{1}{2}$}t))^p\\
\displaystyle  \phantom{-}\leq \frac{1}{2} \int_{K_{\mathrm{o}}} \varphi_{\tau}(s-y \cdot \nabla \overline{z}(x))^p
\langle \overline{z} \rangle(x)^{1-p}+\varphi_{\tau}(y \cdot \nabla \underline{z}(x)-s) \langle-\underline{z}\rangle(x)^{1-p}\,dx\phantom{-------}\\
\displaystyle \phantom{--} + \frac{1}{2} \int_{K_{\mathrm{o}}}\varphi_{\tau}(t - y \cdot \nabla \overline{z}(x))^p
\langle\overline{z}\rangle(x)^{1-p}+\varphi_{\tau}(y \cdot \nabla \underline{z}(x)-t)^p\langle-\underline{z}\rangle(x)^{1-p}\,dx\\
\displaystyle \phantom{-} = \frac{1}{2}
c_{n,p}^{-1}(\tau)h(\Pi_p^{\tau}K,(y,s))^p+\frac{1}{2} c_{n,p}^{-1}(\tau)h(\Pi_p^{\tau}K,(y,t))^p=c_{n,p}^{-1}(\tau).
\end{array}\]

If there is equality in (\ref{steineren}), then $h(\Pi_p^{\tau}
K,(y,s))=h(\Pi_p^{\tau} K,(y,t))=1$ with $s \neq t$ implies
$h(\Pi_p^{\tau}S_{e_n}K,(y,\mbox{$\frac{1}{2}$}s-\mbox{$\frac{1}{2}$}t))=1$.
Consequently, equality must hold in the above chain of
inequalities. The equality conditions of (\ref{numbers}) now
yield for every $x \in K_{\mathrm{o}}$,
\begin{eqnarray*}
\varphi_{\tau}( s - y \cdot \nabla \overline{z}(x))
\langle-\underline{z} \rangle(x) & = & \varphi_{\tau}(y \cdot
\nabla \underline{z}(x) - t) \langle \overline{z} \rangle(x),\\
\varphi_{\tau}(y \cdot \nabla \underline{z}(x) - s) \langle
\overline{z} \rangle(x) & = & \varphi_{\tau}(t - y \cdot \nabla
\overline{z}(x)) \langle-\underline{z}\rangle(x).
\end{eqnarray*}
Hence, for $y = 0$, we conclude that
\begin{eqnarray*}
(|s|+\tau s)\langle-\underline{z}\rangle(x)&=&(|t|-\tau t)\langle\overline{z}\rangle(x),\\
(|s|-\tau s)\langle\overline{z}\rangle(x)&=&(|t|+\tau
t)\langle-\underline{z}\rangle(x).
\end{eqnarray*}
Since $(0,s),(0,t)\in\mathrm{bd }\,\Pi_p^{\tau,*}K$, there must exist a constant $c >
0$ such that $\langle\underline{z}+c\overline{z}\rangle=0$. Thus,
by (\ref{kernel}), $\underline{z}+c\overline{z}$ has to be linear.
Define $r:=c/(c+1)$.

Since $\underline{z}+c\overline{z}$ is linear, the points which
divide the (directed) chords of $K$ parallel to $e_n$ in the
proportion $r:r-1$ are coplanar.
\end{proof}

\section{Proof of the main theorems}

We are now in a position to establish our main results. We first
complete the proof of Theorem \ref{main1}. Then we show that the
nonsymmetric $L_p$ projection bodies lead to the strongest affine
isoperimetric inequality among the family of inequalities
established in Theorem \ref{main1}. The corresponding result for
nonsymmetric $L_p$ moment bodies will be given after the proof of
Theorem \ref{main2}. We emphasize again that we are assuming
throughout that $n \geq 3$ and $1 < p < \infty$.

In order to settle the equality conditions of Theorem
\ref{main1}, we will need the following generalization of the
Bertrand--Brunn theorem due to Gruber \textbf{\cite{Gruber74}}:

\begin{theorem} \label{gruber} Let $K \in \mathcal{K}_{\mathrm{o}}^n$ be a convex body. Suppose that
for any family of parallel chords of $K$ there exists an $r\in[0,1]$ such that the points
which divide the (directed) chords of $K$ in the proportion $r:r-1$ are
coplanar. Then $K$ is an ellipsoid.
\end{theorem}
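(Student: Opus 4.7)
The plan is to reduce Theorem \ref{gruber} to the classical Bertrand--Brunn theorem (the special case $r=1/2$, where coplanar midpoints of parallel chord families characterize ellipsoids) by showing that the ratio $r$ attached to each direction is forced to equal $1/2$. First I would exploit the hypothesis jointly for a direction $u$ and its opposite $-u$. With $K$ written in the form $K=\{(x,t):\underline{z}(x)\leq t\leq\overline{z}(x)\}$ along $u=e_n$, the hypothesis for $u$ yields some $r_u\in[0,1]$ with $(1-r_u)\underline{z}+r_u\overline{z}$ affine on $K|e_n^{\bot}$. Using $\underline{z}_{-u}=-\overline{z}$ and $\overline{z}_{-u}=-\underline{z}$, the hypothesis for $-u$ says $r_{-u}\underline{z}+(1-r_{-u})\overline{z}$ is also affine. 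If the $2{\times}2$ coefficient matrix of these two relations were nonsingular, both $\underline{z}$ and $\overline{z}$ would individually be affine, and $K$ would be sandwiched between two hyperplanes over a common base; repeating this in every direction shows such a $K$ cannot satisfy the hypothesis for generic non-parallel directions. Hence, after ruling out degenerate cases, $r_{-u}=1-r_u$, so the hyperplane $H_u$ and $H_{-u}$ coincide, and each undirected direction carries a single well-defined dividing hyperplane.

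The next step is to upgrade the antipodal relation $r_{-u}+r_u=1$ to the much stronger statement $r_u=1/2$ for every $u\in S^{n-1}$. A natural first attempt is a continuity/topological argument: if $u\mapsto r_u$ depends continuously on $u$ and satisfies $r_{-u}=1-r_u$, then $u\mapsto r_u-1/2$ is odd and thus vanishes on an $(n-2)$-cycle in $S^{n-1}$; one would then propagate this vanishing by comparing the hypothesis along two non-parallel directions $u_1,u_2$, using the fact that the hyperplanes $H_{u_1}, H_{u_2}$ meet in an $(n-2)$-flat through which chords lying in $\mathrm{span}(u_1,u_2)$ must be divided compatibly by both ratios. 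Alternatively, one can first translate $K$ so that every $H_u$ passes through a single interior point $o$ (a candidate ``center'') and then interpret the hypothesis as saying that every chord through $o$ is split in a ratio depending only on direction, which, combined with convexity, is a classical rigidity condition characterizing ellipsoids.

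Once $r_u=1/2$ is in hand for every direction, each family of parallel chords has its midpoints lying in a hyperplane, and the classical Bertrand--Brunn theorem (valid for $n\geq 3$) gives that $K$ is an ellipsoid. The main obstacle is precisely the promotion from the weak antipodal identity $r_{-u}=1-r_u$ to the uniform value $r_u=1/2$: this is an affine-geometric rigidity step that does not follow from elementary manipulations within a single direction, and it requires combining the hypothesis across a sufficiently rich family of directions. In the spirit of Gruber \cite{Gruber74}, I would expect the cleanest implementation to proceed by fixing two transverse directions and analysing the planar cross-section spanned by them, where the coplanarity of the dividing points along both directions imposes a functional equation on the boundary that can only be satisfied by a (possibly degenerate) conic, thereby forcing both ratios to coincide with $1/2$ and reducing the theorem to its classical ancestor.
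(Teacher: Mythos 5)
First, note that the paper does not prove this statement at all: it is quoted verbatim as a known generalization of the Bertrand--Brunn theorem due to Gruber \cite{Gruber74}, and is used as a black box to settle the equality conditions of Theorem 1. So there is no internal proof to compare your argument against; your proposal has to stand on its own as a proof of Gruber's theorem, and it does not.

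The central step is missing, and you say so yourself. Reducing to the classical Bertrand--Brunn theorem by first showing $r_u=1/2$ for every direction is a legitimate architecture, but the passage from the antipodal identity $r_{-u}=1-r_u$ to $r_u\equiv 1/2$ is essentially the whole content of the theorem, and neither of your two suggested routes is carried out. The oddness of $u\mapsto r_u-\tfrac12$ only gives (via a Borsuk--Ulam type argument, and only after you have proved that $r_u$ is well defined and continuous in $u$, which itself requires ruling out the degenerate directions where $\overline{z}-\underline{z}$ is affine) that $r_u=\tfrac12$ on \emph{some} set of directions; the ``propagation'' to all directions is precisely the affine-rigidity step you have not supplied. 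Likewise, the alternative route presupposes that the hyperplanes $H_u$ are concurrent, which is again equivalent in difficulty to the conclusion. There is also a secondary gap in your treatment of the degenerate case $r_u+r_{-u}\neq 1$: this forces $\underline{z}$ and $\overline{z}$ to be affine only for that one direction $u$, so ``repeating this in every direction'' is not available; you would have to show directly that a body which is an affine cylinder over $K|u^{\bot}$ in a single direction violates the coplanarity hypothesis in some transverse direction. As it stands, the proposal is a plausible plan with the decisive lemma unproved, not a proof.
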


By Lemma \ref{ineqgaphi}, the following result completes the proof
of Theorem \ref{main1}:

\begin{theorem} If $K \in \mathcal{K}^n_{\mathrm{o}}$ is smooth, then for every
$\tau\in[-1,1]$,
\begin{equation*}
V(K)^{n/p-1}V(\Pi_p^{\tau,*}K) \leq V(B)^{n/p},
\end{equation*}
with equality if and only if $K$ is an ellipsoid centered at the
origin.
\end{theorem}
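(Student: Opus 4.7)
The natural approach is the classical Steiner symmetrization argument. Define the functional
\[\Phi(K):=V(K)^{n/p-1}V(\Pi_p^{\tau,*}K).\]
Combining the $\mathrm{SL}(n)$ contravariance of $\Pi_p^{\tau}$ with its homogeneity of degree $n/p-1$ and the standard scaling $V(\lambda K)=\lambda^n V(K)$, a direct computation shows that $\Phi$ is invariant under the full linear group $\mathrm{GL}(n)$, hence in particular $\Phi(rB)=\Phi(B)=V(B)^{n/p}$ for every $r>0$. The desired inequality thus becomes $\Phi(K)\le\Phi(B)$.

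The first step is to show that $\Phi$ is non-decreasing under any Steiner symmetrization $S_u$ with respect to a hyperplane $u^\perp$ through the origin. Since $S_u$ preserves volume, Lemma \ref{st} yields
\[V(\Pi_p^{\tau,*}K)=V(S_u\Pi_p^{\tau,*}K)\le V(\Pi_p^{\tau,*}S_uK),\]
so $\Phi(S_uK)\ge\Phi(K)$. (A standard approximation justifies iterating the use of Lemma \ref{st} despite the fact that $S_u$ need not preserve smoothness.) By the classical convergence theorem for iterated Steiner symmetrizations, there is a sequence $K_j$ of such symmetrizations of $K$ converging in the Hausdorff metric to the ball $rB$ of volume $V(K)$ centered at the origin. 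Continuity of $\Phi$ on $\mathcal{K}^n_{\mathrm{o}}$ then delivers
\[\Phi(K)\le\lim_{j\to\infty}\Phi(K_j)=\Phi(rB)=V(B)^{n/p},\]
which is the asserted inequality.

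For the equality case, assume $\Phi(K)=V(B)^{n/p}$. Then $\Phi$ is constant along the symmetrization sequence, so in each step equality must hold in Lemma \ref{st}: the inclusion $S_u\Pi_p^{\tau,*}K\subseteq\Pi_p^{\tau,*}S_uK$ is an equality of convex bodies of equal volume. Applying the equality portion of Lemma \ref{st} in every direction $u\in S^{n-1}$, we obtain for each $u$ a ratio $r_u\in[0,1]$ such that the points dividing the parallel chords of $K$ in the proportion $r_u:1-r_u$ lie in a common hyperplane. Gruber's generalization of the Bertrand--Brunn theorem (Theorem \ref{gruber}) then forces $K$ to be an ellipsoid.

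The main obstacle is to upgrade ``ellipsoid'' to ``ellipsoid centered at the origin,'' since $\Pi_p^{\tau}$ is not translation invariant for $p>1$ and the equality case is genuinely sensitive to position. I would handle this by first using the $\mathrm{GL}(n)$-invariance of $\Phi$ to reduce to $K=B+c$ with $|c|<1$, and then showing that for $c\ne 0$ the inclusion in Lemma \ref{st} with $u=c/|c|$ is strict, contradicting the standing equality hypothesis. Alternatively, one can appeal to the equality conditions in the $L_p$ and dual $L_p$ Minkowski inequalities used in the proof of Lemma \ref{ineqgaphi}: these would force $K$ to be a dilate of $\mathrm{M}_p^{\tau}\Pi_p^{\tau,*}K$, and since the latter is a specific origin-containing body whose shape is determined by $K$, this pins $K$ down to be centered at the origin.
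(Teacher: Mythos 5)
Your argument is essentially the paper's proof: monotonicity of $V(K)^{n/p-1}V(\Pi_p^{\tau,*}K)$ under Steiner symmetrization via Lemma \ref{st}, convergence of iterated symmetrals to a ball, and Theorem \ref{gruber} for the equality case. Two comments. The parenthetical about approximation is unnecessary: the Steiner symmetral of a smooth convex body is again smooth (as noted in the proof of Lemma \ref{st}), so the lemma iterates directly; moreover, equality in each single step follows simply because $\Phi(K)\le\Phi(S_uK)\le V(B)^{n/p}$ once the inequality is known for all smooth bodies. More substantively, the ``obstacle'' you flag at the end is already resolved by the equality analysis inside Lemma \ref{st}, and this is the one idea your write-up misses. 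There, equality forces $\langle\underline{z}+c\overline{z}\rangle=0$, and since $\langle z\rangle$ of an affine function $a\cdot x+b$ equals $b$, the kernel statement (\ref{kernel}) gives that $\underline{z}+c\overline{z}$ is linear in the \emph{homogeneous} sense: the dividing points lie in a hyperplane through the origin (a subspace of codimension one) for every direction $u$. Combined with Theorem \ref{gruber}, and the fact that for an ellipsoid these hyperplanes all pass through its center, this yields an ellipsoid centered at the origin with no further work. Of your two proposed substitutes, the first (reduce to $K=B+v$ by $\mathrm{GL}(n)$-invariance and check that equality in Lemma \ref{st} fails for $u=v/|v|$ when $v\neq0$) can indeed be completed by the same computation, with $\overline{z}(x)=\sqrt{1-|x|^2}+|v|$ and $\underline{z}(x)=-\sqrt{1-|x|^2}+|v|$, since $\underline{z}+c\overline{z}$ then has the nonzero constant term $(1+c)|v|$; but the second is not convincing as stated, because $K$ being a dilate of $\mathrm{M}_p^{\tau}\Pi_p^{\tau,*}K$ does not by itself locate the center ($\Pi_p^{\tau}$ is not translation covariant, so this body's center is not transparently determined). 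So: correct approach and correct inequality, but the origin-centering is left as a sketch where the paper extracts it for free from the equality condition of Lemma \ref{st}.
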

\begin{proof} Since Steiner symmetrization does not affect volume, we
deduce from Lemma \ref{st} that for every direction $u$,
\[V(K)^{n/p-1}V(\Pi_p^{\tau,*} K)\leq V(S_u
K)^{n/p-1}V(\Pi_p^{\tau,*} S_u K).\] If equality holds, there
exists an $r\in[0,1]$ such that the points which divide the
chords of $K$ parallel to $u$ in the proportion $r:1-r$ are
coplanar.

We can now choose a sequence of Steiner symmetrals of the convex
body $K$ which converges to $(V(K)/\kappa_n)^{1/n}B$ (see e.g.,
\textbf{\cite[\textnormal{p.\ 172}]{Gruber:CDG}}). By the
continuity and the homogeneity of $\Pi_p^{\tau,*}$, we obtain
\[V(K)^{n/p-1}V(\Pi_p^{\tau,*} K)\leq V(B)^{n/p}.\]
If equality holds, then for every direction $u$ there exists an
$r\in[0,1]$ such that the points which divide the chords of $K$
parallel to $u$ in the proportion $r:1-r$ are contained in a
subspace of codimension 1 (by the proof of Lemma \ref{st}).
\linebreak Together with Theorem \ref{gruber}, this implies that
$K$ must be an ellipsoid centered at the origin.
\end{proof}

If $K \in \mathcal{K}^n_{\mathrm{o}}$ is origin-symmetric, then
for any $\tau, \sigma \in [-1,1]$,
$\Pi_p^{\tau}K=\Pi_p^{\sigma}K$ and Theorem \ref{main1} reduces
to the $L_p$ Petty projection inequality established in
\textbf{\cite{LYZ2000}}. If $K$ is not origin-symmetric, the following
theorem shows that the
nonsymmetric operators $\Pi_p^{\pm}$ provide the strongest
inequalities:

\begin{theorem} \label{strongest} For every $K \in
\mathcal{K}^n_{\mathrm{o}}$,
\[V(\Pi_p^* K) \leq V(\Pi_p^{\tau,*} K) \leq V(\Pi_p^{\pm,*}K).\]
If $K$ is not origin-symmetric and $p$ is not an odd integer,
there is equality in the left inequality if and only if $\tau =
0$ and equality in the right inequality if and only if $\tau =
\pm 1$.
\end{theorem}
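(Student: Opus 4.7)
The plan is to reduce everything to pointwise (strict) convexity of the function $\phi(t) = t^{-n/p}$ on $(0,\infty)$. First I would set $A(u) := h(\Pi_p^+K,u)^p$ and note that $B(u) := h(\Pi_p^-K,u)^p = A(-u)$, since $\Pi_p^-K = -\Pi_p^+K$. Writing $\alpha := (1+\tau)^p / ((1+\tau)^p + (1-\tau)^p)$ and $\beta := 1-\alpha$, the representation (\ref{taupm}) gives
\[h(\Pi_p^\tau K,u)^{-n} = (\alpha A(u) + \beta B(u))^{-n/p}.\]
Substituting $u \mapsto -u$ in the polar-coordinate formula for $V(\Pi_p^{\tau,*}K)$ and averaging yields the symmetric representation
\[2n\, V(\Pi_p^{\tau,*}K) = \int_{S^{n-1}} \bigl[(\alpha A + \beta B)^{-n/p} + (\beta A + \alpha B)^{-n/p}\bigr]\,du.\]

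For the left inequality I would exploit midpoint convexity of $\phi$: since the two arguments average to $(A+B)/2 = h(\Pi_p K,u)^p$, one has pointwise
\[(\alpha A + \beta B)^{-n/p} + (\beta A + \alpha B)^{-n/p} \geq 2\bigl(\tfrac{A+B}{2}\bigr)^{-n/p} = 2\,h(\Pi_p K,u)^{-n},\]
and integration gives $V(\Pi_p^{\tau,*}K) \geq V(\Pi_p^* K)$. For the right inequality I would apply Jensen's inequality directly to $\phi$,
\[(\alpha A + \beta B)^{-n/p} \leq \alpha A^{-n/p} + \beta B^{-n/p},\]
then use $\int_{S^{n-1}} A^{-n/p}\,du = \int_{S^{n-1}} B^{-n/p}\,du = n V(\Pi_p^{\pm,*}K)$ (once more by $u \mapsto -u$) to conclude $V(\Pi_p^{\tau,*}K) \leq V(\Pi_p^{\pm,*}K)$.

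The equality analysis then follows from the strict convexity of $\phi$ on $(0,\infty)$. Equality in the midpoint bound forces $\alpha A + \beta B = \beta A + \alpha B$ a.e.\ on $S^{n-1}$, i.e.\ $(\alpha-\beta)(A-B) \equiv 0$, so either $\tau = 0$ or $A \equiv B$, which means $\Pi_p^+K = \Pi_p^-K$. Equality in the Jensen step forces $\alpha \in \{0,1\}$ (that is, $\tau = \pm 1$) or again $A \equiv B$. In both cases, the residual alternative $\Pi_p^+K = \Pi_p^-K$ is precisely the one ruled out by Lemma~\ref{inject} whenever $p$ is not an odd integer and $K$ is not origin-symmetric, giving the stated characterizations. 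The argument is essentially notational rather than technical: once the symmetrized integral representation of $V(\Pi_p^{\tau,*}K)$ is in place, strict convexity of $\phi$ carries out both bounds and pins down the equality cases, so I expect no substantial obstacle.
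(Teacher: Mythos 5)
Your argument is correct, and it follows a genuinely different route from the one in the paper.

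For the right inequality, the paper applies the dual $L_p$ Brunn--Minkowski inequality (\ref{dbminkin}) to the dual harmonic combination (\ref{radpp}) of $\Pi_p^{+,*}K$ and $\Pi_p^{-,*}K$, and the equality condition there (dilation) is reduced to $\Pi_p^+K=\Pi_p^-K$. You instead integrate the pointwise Jensen bound $(\alpha A+\beta B)^{-n/p}\leq \alpha A^{-n/p}+\beta B^{-n/p}$ and then use the change of variables $u\mapsto -u$ to identify $\int A^{-n/p}=\int B^{-n/p}=nV(\Pi_p^{\pm,*}K)$; this is essentially the integral computation underlying the dual $L_p$ Brunn--Minkowski inequality, so the two routes are morally equivalent, but yours stays entirely at the level of real-variable convexity. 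For the left inequality the two approaches diverge more substantially: the paper proceeds by a variational argument, differentiating $\tau\mapsto V(\Pi_p^{\tau,*}K)$, locating the interior minimum, deriving the first-order condition $\widetilde{V}_{-p}(\Pi_p^{\bar\tau,*}K,\Pi_p^{+,*}K)=\widetilde{V}_{-p}(\Pi_p^{\bar\tau,*}K,\Pi_p^{-,*}K)$, and then invoking the dual $L_p$ Minkowski inequality (\ref{dminkin}) to force origin-symmetry of $\Pi_p^{\bar\tau,*}K$ and hence $\bar\tau=0$. You replace all of this with a single pointwise midpoint-convexity estimate applied to the symmetrized integrand $(\alpha A+\beta B)^{-n/p}+(\beta A+\alpha B)^{-n/p}$, using the observation that the two arguments average to $\tfrac{1}{2}(A+B)=h(\Pi_p K,\cdot)^p$; strict convexity of $t\mapsto t^{-n/p}$ then delivers both the bound and the equality condition $(\alpha-\beta)(A-B)\equiv 0$ in one stroke. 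This is shorter and avoids the differentiability analysis and the dual $L_p$ Minkowski inequality altogether. In both approaches the final dichotomy ($\tau$ extremal versus $\Pi_p^+K=\Pi_p^-K$) is closed by Lemma~\ref{inject} exactly as you indicate, and your proof correctly covers the degenerate cases ($K$ origin-symmetric or $p$ an odd integer) since the pointwise inequalities hold unconditionally.
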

\begin{proof} We may assume that $K$ is not origin-symmetric and that $p$ is not an odd integer (otherwise
the statement is trivial or follows by approximation). Let
$-1<\tau<1$. From (\ref{suprad}) and the definition of
$\Pi_p^{\tau}$, we obtain
\begin{equation}\label{radpp}
\Pi_p^{\tau,*}K=\frac{(1+\tau)^p}{(1+\tau)^p+(1-\tau)^p}\cdot
\Pi_p^{+,*}K \, \widetilde{+}_p \,
\frac{(1-\tau)^p}{(1+\tau)^p+(1-\tau)^p} \cdot \Pi_p^{-,*}K.
\end{equation}
Here, multiplication is the dual $L_p$ scalar multiplication,
i.e., $\lambda\cdot K=\lambda^{-1/p}\,K$. Using the dual $L_p$
Brunn--Minkowski inequality (\ref{dbminkin}), we obtain
\begin{equation}\label{ineq1}
V(\Pi_p^{\tau,*}K)\leq V(\Pi_p^{\pm,*}K),
\end{equation}
with equality if and only if $\Pi_p^{+,*}K$ and $\Pi_p^{-,*}K$
are dilates which is only possible if $\Pi_p^+ K=\Pi_p^- K$. From
Lemma \ref{inject}, it follows that inequality (\ref{ineq1}) is
strict for every $\tau\in(-1,1)$ which completes the proof of the
right inequality.

In order to see the left inequality, note that the polar
coordinate formula for volume yields
\[V(\Pi_p^{\tau,*}K)=\frac{1}{n}\int_{S^{n-1}}\rho(\Pi_p^{\tau,*}K,u)^n\,du.\]
Thus, using (\ref{radpp}), we obtain
\[\frac{\partial}{\partial \tau}V(\Pi_p^{\tau,*}K)=f(\tau)\int_{S^{n-1}}\!\!\rho(\Pi_p^{\tau,*}K,u)^{n+p}\left(\rho(\Pi_p^{+,*}K,u)^{-p}-\rho(\Pi_p^{-,*}K,u)^{-p}\right)du,\]
where
\begin{equation} \label{ftau}
f(\tau)=-\frac{2(1-\tau)^{p-1}(1+\tau)^{p-1}}{((1+\tau)^p+(1-\tau)^p)^2}<0.
\end{equation}
The continuous function $\tau \mapsto V(\Pi_p^{\tau,*}K)$ must
attain a minimum on $[-1,1]$. By the first part of the proof, the
points where this minimum is attained, are contained in $(-1,1)$.
If $\bar{\tau}$ is such a point, then
\[\left . \frac{\partial}{\partial \tau}V(\Pi_p^{\tau,*}K) \right |_{\tau=\bar{\tau}}=0.  \]
By the calculation above and definition (\ref{defvminp}), this is
equivalent to
\begin{equation} \label{diff0}
\widetilde{V}_{-p}(\Pi_p^{\bar{\tau},*}K,\Pi_p^{+,*}K)=\widetilde{V}_{-p}(\Pi_p^{\bar{\tau},*}K,\Pi_p^{-,*}K).
\end{equation}
Since, for $Q, K, L \in \mathcal{S}^n$ and $\alpha, \beta > 0$,
\[\widetilde{V}_{-p}(Q,\alpha \cdot K \, \widetilde{+} \, \beta \cdot L)=\alpha \widetilde{V}_{-p}(Q,K)+\beta \widetilde{V}_{-p}(Q,L),   \]
the representation (\ref{radpp}) and the identity (\ref{diff0})
imply
\[\widetilde{V}_{-p}(\Pi_p^{\bar{\tau},*}K,\Pi_p^{\bar{\tau},*}K)=\widetilde{V}_{-p}(\Pi_p^{\bar{\tau},*}K,\Pi_p^{\bar{\tau},*}(-K)).\]
By (\ref{vol2}) and since
$\Pi_p^{\bar{\tau},*}(-K)=-\Pi_p^{\bar{\tau},*}K$, we therefore
obtain
\[V(\Pi_p^{\bar{\tau},*}K)=\widetilde{V}_{-p}(\Pi_p^{\bar{\tau},*}K,-\Pi_p^{\bar{\tau},*}K).\]
Using the dual $L_p$ Minkowski inequality (\ref{dminkin}), we
conclude that $\Pi_p^{\bar{\tau},*}K$ is origin-symmetric. By
(\ref{radpp}), this is equivalent to
\[\left((1+\bar{\tau})^p-(1-\bar{\tau})^p\right)\left(\rho(\Pi_p^{+,*}K,u)^{-p}-\rho(\Pi_p^{-,*}K,u)^{-p}\right)=0\]
for every $u\in S^{n-1}$. As before, an application of Lemma
\ref{inject}, shows that $\Pi_p^{+,*}K \neq \Pi_p^{-,*}K$. Thus,
we must have $\bar{\tau}=0$ which proves the left inequality.
\end{proof}

In view of (\ref{mconst}), our next result is a stronger version
of Theorem \ref{main2}:

\begin{theorem} \label{main2a} If $L \in \mathcal{S}^n$, then for every
$\tau\in[-1,1]$,
\begin{equation*}
V(L)^{-n/p-1}V(\mathrm{M}_p^{\tau} L) \geq V(B)^{-n/p},
\end{equation*}
with equality if and only if $L$ is an ellipsoid centered at the
origin.
\end{theorem}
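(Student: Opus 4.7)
The plan is to use the class-reduction idea indicated in the paper: transfer Theorem \ref{main1} (already proved for all $K\in\mathcal{K}^n_{\mathrm{o}}$, including the smooth body $\mathrm{M}_p^{\tau}L$) to star bodies via Lemma \ref{durch}, with the dual $L_p$ Minkowski inequality providing the second link.

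\medskip

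The first step is to take $K=\mathrm{M}_p^{\tau}L$ in Lemma \ref{durch} and use (\ref{vol1}), which gives
\[
V(\mathrm{M}_p^{\tau}L)=V_p(\mathrm{M}_p^{\tau}L,\mathrm{M}_p^{\tau}L)=\widetilde V_{-p}(L,\Pi_p^{\tau,*}\mathrm{M}_p^{\tau}L).
\]
Applying the dual $L_p$ Minkowski inequality (\ref{dminkin}) to the right--hand side then yields
\[
V(\mathrm{M}_p^{\tau}L)^{n}\;\ge\;V(L)^{n+p}\,V(\Pi_p^{\tau,*}\mathrm{M}_p^{\tau}L)^{-p},
\]
with equality if and only if $L$ and $\Pi_p^{\tau,*}\mathrm{M}_p^{\tau}L$ are dilates.

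\medskip

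The second step is to invoke Theorem \ref{main1} with the smooth convex body $\mathrm{M}_p^{\tau}L\in\mathcal{K}^n_{\mathrm{o}}$ in the role of $K$, which bounds $V(\Pi_p^{\tau,*}\mathrm{M}_p^{\tau}L)$ from above by $V(B)^{n/p}V(\mathrm{M}_p^{\tau}L)^{1-n/p}$, with equality if and only if $\mathrm{M}_p^{\tau}L$ is an origin--centered ellipsoid. Substituting this into the previous display and simplifying the powers of $V(\mathrm{M}_p^{\tau}L)$ leaves precisely
\[
V(\mathrm{M}_p^{\tau}L)^{p}\;\ge\;V(L)^{n+p}\,V(B)^{-n},
\]
which, after taking $p$-th roots and dividing by $V(L)^{n/p+1}$, is the claimed inequality
\[
V(L)^{-n/p-1}V(\mathrm{M}_p^{\tau}L)\;\ge\;V(B)^{-n/p}.
\]

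\medskip

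For the equality case, both inequalities invoked must be equalities. Equality in Theorem \ref{main1} forces $\mathrm{M}_p^{\tau}L$ to be an ellipsoid centered at the origin; equality in (\ref{dminkin}) forces $L$ to be a dilate of $\Pi_p^{\tau,*}\mathrm{M}_p^{\tau}L$. Since $\Pi_p^{\tau}$ is $\mathrm{SL}(n)$ contravariant and positively homogeneous with $\Pi_p^{\tau}B=B$, the body $\Pi_p^{\tau,*}$ applied to an origin--centered ellipsoid is again an origin--centered ellipsoid, so $L$ itself is an origin--centered ellipsoid. Conversely, if $L$ is an origin--centered ellipsoid, the $\mathrm{SL}(n)$ covariance of $\mathrm{M}_p^{\tau}$ together with the normalization $\mathrm{M}_p^{\tau}B=B$ gives that $\mathrm{M}_p^{\tau}L$ is also an origin--centered ellipsoid and that $L$ is a dilate of $\Pi_p^{\tau,*}\mathrm{M}_p^{\tau}L$, so both of the inequalities used are equalities.

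\medskip

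No step presents a genuine obstacle once Lemma \ref{durch} is in hand; the only point requiring care is the equality analysis, where one must combine the equality cases of (\ref{dminkin}) and of Theorem \ref{main1}, and use the behavior of $\Pi_p^{\tau,*}$ on origin--centered ellipsoids to conclude that $L$ itself is such an ellipsoid.
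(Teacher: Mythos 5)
Your proposal is correct and follows essentially the same route as the paper: the paper also applies Theorem \ref{main1} to $\mathrm{M}_p^{\tau}L$ and combines it with inequality (\ref{i0}), which is exactly the consequence of Lemma \ref{durch} and the dual $L_p$ Minkowski inequality (\ref{dminkin}) that you rederive in your first step. Your equality analysis likewise matches the paper's, which concludes that $L$ is an origin-centered ellipsoid from the fact that $\Pi_p^{\tau,*}$ is linearly associating.
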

\begin{proof} By definition, $\mathrm{M}_p^{\tau} L\in\mathcal{K}_{\mathrm{o}}^n$. In Theorem \ref{main1}, take $K=\mathrm{M}_p^{\tau} L$, to obtain
\begin{equation}\label{imp}
V(\Pi_p^{\tau,*} \mathrm{M}_p^{\tau} L)^{-p}\geq
V(B)^{-n}V(\mathrm{M}_p^{\tau} L)^{n-p},
\end{equation}
with equality if and only if $\mathrm{M}_p^{\tau} L$ is an
ellipsoid centered at the origin. Combine this with (\ref{i0}) and
get
\[V(L)^{-n/p-1}V(\mathrm{M}_p^{\tau} L) \geq V(B)^{-n/p}.\]
If equality holds in this inequality, then equality must hold in
(\ref{i0}) and (\ref{imp}). Consequently, $L$ and $\Pi_p^{\tau,*}
\mathrm{M}_p^{\tau} L$ are dilates and $\mathrm{M}_p^{\tau} L$ is
an ellipsoid centered at the origin. Since $\Pi_p^{\tau,*}$ is
linearly associating, this implies that $L$ is an ellipsoid
centered at the origin.
\end{proof}

Now a combination of Theorem \ref{main2} with the
Blaschke--Santal\'{o} inequality yields the Corollary stated in
the introduction.

Our final result shows that the strongest inequalities in Theorem
\ref{main2a} are provided by the nonsymmetric operators
$\mathrm{M}_p^{\pm}$:

\begin{theorem} For every $L \in \mathcal{S}^n$,
\[V(\mathrm{M}_pL) \geq V(\mathrm{M}_p^{\tau}L) \geq V(\mathrm{M}_p^{\pm}L),\]
If $L$ is not origin-symmetric and $p$ is not an odd integer,
there is equality in the left inequality if and only if $\tau = 0$
and equality in the right inequality if and only if $\tau = \pm
1$.
\end{theorem}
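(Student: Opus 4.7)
My plan is to handle the two inequalities by quite different arguments, both of which draw entirely on machinery already assembled in the paper. For the right-hand bound $V(\mathrm{M}_p^{\tau}L) \geq V(\mathrm{M}_p^{\pm}L)$, I would feed Firey's $L_p$ Brunn--Minkowski inequality (\ref{bmin}) into the representation (\ref{mtaupm}). Writing $\alpha(\tau)=(1+\tau)^p/((1+\tau)^p+(1-\tau)^p)$ and $\beta(\tau)=1-\alpha(\tau)$, and using $V(\mathrm{M}_p^{-}L)=V(\mathrm{M}_p^{+}L)$ (because $\mathrm{M}_p^{-}L=-\mathrm{M}_p^{+}L$) together with $V(\lambda\cdot K)=\lambda^{n/p}V(K)$, inequality (\ref{bmin}) immediately delivers $V(\mathrm{M}_p^{\tau}L)^{p/n}\geq(\alpha+\beta)V(\mathrm{M}_p^{\pm}L)^{p/n}=V(\mathrm{M}_p^{\pm}L)^{p/n}$. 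For $\tau\in(-1,1)$ the equality case of (\ref{bmin}) forces $\mathrm{M}_p^{+}L$ and $\mathrm{M}_p^{-}L$ to be dilates and hence, by equality of volumes, actually equal, which by Lemma \ref{inject} yields origin-symmetry of $L$; at $\tau=\pm 1$ the two sides are tautologically equal.

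The left-hand bound $V(\mathrm{M}_pL)\geq V(\mathrm{M}_p^{\tau}L)$ is subtler and cannot be read off from a Brunn--Minkowski argument, so I would base it instead on the $L_p$ Minkowski inequality (\ref{minkin}) combined with a symmetry observation. The key point is that $\mathrm{M}_pL$ is origin-symmetric, since
\[h(\mathrm{M}_pL,u)^p=\tfrac{1}{2}h(\mathrm{M}_p^{+}L,u)^p+\tfrac{1}{2}h(\mathrm{M}_p^{+}L,-u)^p\]
is manifestly even. Hence $S_p(\mathrm{M}_pL,\cdot)$ is an even measure on $S^{n-1}$, and a change of variables $u\mapsto -u$ in (\ref{defvp}) shows that $V_p(\mathrm{M}_pL,\mathrm{M}_p^{-}L)=V_p(\mathrm{M}_pL,-\mathrm{M}_p^{+}L)=V_p(\mathrm{M}_pL,\mathrm{M}_p^{+}L)$. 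Since $V_p(K,\cdot)$ is linear in the $p$-th power of the support function of its second argument, this symmetry together with (\ref{mtaupm}) gives $V_p(\mathrm{M}_pL,\mathrm{M}_p^{\tau}L)=V_p(\mathrm{M}_pL,\mathrm{M}_pL)=V(\mathrm{M}_pL)$ for every $\tau$. Applying (\ref{minkin}) to the pair $(\mathrm{M}_pL,\mathrm{M}_p^{\tau}L)$ now yields $V(\mathrm{M}_pL)^n\geq V(\mathrm{M}_pL)^{n-p}V(\mathrm{M}_p^{\tau}L)^p$, which is the desired inequality. For equality, $\mathrm{M}_pL$ and $\mathrm{M}_p^{\tau}L$ must be dilates, and because their volumes coincide they are equal; comparing support functions then leaves $(\alpha(\tau)-\tfrac{1}{2})\bigl(h(\mathrm{M}_p^{+}L,\cdot)^p-h(\mathrm{M}_p^{-}L,\cdot)^p\bigr)\equiv 0$, so either $\tau=0$ or $\mathrm{M}_p^{+}L=\mathrm{M}_p^{-}L$, and in the second case Lemma \ref{inject} again forces $L$ to be origin-symmetric.

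The only step that is not quite immediate from the tools already developed is spotting and verifying the symmetry $V_p(\mathrm{M}_pL,\mathrm{M}_p^{+}L)=V_p(\mathrm{M}_pL,\mathrm{M}_p^{-}L)$; once this is in hand, everything else is a direct application of (\ref{bmin}), (\ref{minkin}), and Lemma \ref{inject}. It is worth noting the structural mirror with Theorem \ref{strongest}: there $\Pi_p^{\tau,*}$ arises as a dual $L_p$ (harmonic radial) combination, so the dual $L_p$ Brunn--Minkowski inequality supplies the \emph{upper} bound and a differentiation/critical-point argument is needed for the \emph{lower} bound; here $\mathrm{M}_p^{\tau}$ is an $L_p$ Minkowski combination, and the roles of the two bounds are reversed accordingly, with the symmetry of $\mathrm{M}_pL$ replacing the calculus argument.
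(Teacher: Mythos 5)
Your proof is correct, and while your treatment of the right-hand inequality coincides with the paper's (apply (\ref{bmin}) to (\ref{mtaupm}), use $V(\mathrm{M}_p^{-}L)=V(-\mathrm{M}_p^{+}L)=V(\mathrm{M}_p^{+}L)$, and settle equality via Lemma \ref{inject}), your route to the left-hand inequality is genuinely different from --- and considerably shorter than --- the one in the paper. The paper establishes that $\tau\mapsto V(\mathrm{M}_p^{\tau}L)$ is differentiable by a variational argument (two difference quotients of mixed volumes, weak convergence of surface area measures, and the $p=1$ case of (\ref{minkin}) squeezing the limits in (\ref{lima})), computes the derivative explicitly, locates an interior maximum $\bar{\tau}$, and shows that the first-order condition forces $\mathrm{M}_p^{\bar{\tau}}L$ to be origin-symmetric, whence $\bar{\tau}=0$. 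You bypass all of this by observing that $\mathrm{M}_pL$ is origin-symmetric, so that $V_p(\mathrm{M}_pL,\mathrm{M}_p^{-}L)=V_p(\mathrm{M}_pL,\mathrm{M}_p^{+}L)$ by the evenness of $S_p(\mathrm{M}_pL,\cdot)$ in (\ref{defvp}) (equivalently, from $V_p(-K,-M)=V_p(K,M)$ and $-\mathrm{M}_pL=\mathrm{M}_pL$); the linearity of $V_p(\mathrm{M}_pL,\cdot)$ under $L_p$ combinations then gives $V_p(\mathrm{M}_pL,\mathrm{M}_p^{\tau}L)=V_p(\mathrm{M}_pL,\mathrm{M}_pL)=V(\mathrm{M}_pL)$ for every $\tau$, and a single application of (\ref{minkin}) yields the inequality. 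Your equality analysis is also sound: dilates of equal volume coincide, and comparing the $p$-th powers of the support functions leaves $\bigl(\alpha(\tau)-\tfrac{1}{2}\bigr)\bigl(h(\mathrm{M}_p^{+}L,\cdot)^p-h(\mathrm{M}_p^{-}L,\cdot)^p\bigr)\equiv 0$, which with Lemma \ref{inject} forces $\tau=0$. What your approach buys is the elimination of the entire differentiability discussion, which is the delicate part of the published proof; what it costs is nothing, and in fact the same symmetrization-of-the-mixed-volume trick would equally well replace the critical-point argument in the left inequality of Theorem \ref{strongest}, using $\widetilde{V}_{-p}(\Pi_p^{*}K,\Pi_p^{\tau,*}K)=V(\Pi_p^{*}K)$ and (\ref{dminkin}) --- your closing structural remark about the duality between the two theorems is accurate.
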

\begin{proof} We may again assume that $L$ is not origin-symmetric and $p$ is not an odd integer. Let $-1<\tau<1$.
Using that for $L_p$ scalar multiplication $\lambda\cdot
K=\lambda^{1/p}\,K$, an application of the $L_p$ Brunn--Minkowski
inequality (\ref{bmin}) to the representation (\ref{mtaupm})
yields
\begin{equation}\label{ineq2}
V(\mathrm{M}_p^{\tau}L) \geq V(\mathrm{M}_p^{\pm}L),
\end{equation}
with equality if and only if $\mathrm{M}_p^{+}L$ and
$\mathrm{M}_p^{-}L$ are dilates which is only possible if
$\mathrm{M}_p^{+}L=\mathrm{M}_p^{-}L$. From Lemma \ref{inject},
it follows that inequality (\ref{ineq2}) is strict for every
$\tau\in(-1,1)$ which completes the proof of the right inequality.

In order to prove the left inequality, we have to calculate the
derivative of the function $\tau \mapsto V(\mathrm{M}_p^{\tau}L)$
with respect to $\tau$: For fixed $\bar{\tau}\in(-1,1)$, note that
\[
\frac{V(\mathrm{M}_p^{\tau}L)-V_1(\mathrm{M}_p^{\tau}L,\mathrm{M}_p^{\bar{\tau}}L)}{\tau-\bar{\tau}}=
\frac{1}{n}\int_{S^{n-1}}\frac{h(\mathrm{M}_p^{\tau}L,u)-h(\mathrm{M}_p^{\bar{\tau}}L,u)}{\tau-\bar{\tau}}\,dS(\mathrm{M}_p^{\tau}L,u),\]
and
\[
\frac{V_1(\mathrm{M}_p^{\bar{\tau}}L,\mathrm{M}_p^{\tau}L)-V(\mathrm{M}_p^{\bar{\tau}}L)}{\tau-\bar{\tau}}=
\frac{1}{n}\int_{S^{n-1}}\frac{h(\mathrm{M}_p^{\tau}L,u)-h(\mathrm{M}_p^{\bar{\tau}}L,u)}{\tau-\bar{\tau}}\,dS(\mathrm{M}_p^{\bar{\tau}}L,u).
\]
From the uniform convergence of support functions and the weak
convergence of surface area measures, we deduce that the limits
\begin{equation} \label{lima}
\lim_{\tau\rightarrow\bar{\tau}}\frac{V(\mathrm{M}_p^{\tau}L)-V_1(\mathrm{M}_p^{\tau}L,\mathrm{M}_p^{\bar{\tau}}L)}{\tau-\bar{\tau}},\qquad
\lim_{\tau\rightarrow\bar{\tau}}\frac{V_1(\mathrm{M}_p^{\bar{\tau}}L,\mathrm{M}_p^{\tau}L)-V(\mathrm{M}_p^{\bar{\tau}}L)}{\tau-\bar{\tau}}
\end{equation}
exist and are both equal to
\[g(\bar{\tau}):=\frac{1}{n}\int_{S^{n-1}}\left.\frac{\partial}{\partial
\tau}h(\mathrm{M}_p^{\tau}L,u)\right|_{\bar{\tau}}\,dS(\mathrm{M}_p^{\bar{\tau}}L,u).
\]
Using the $L_p$ Minkowski inequality (\ref{minkin}) for $p=1$ in
(\ref{lima}), shows that
\[
g(\bar{\tau})\leq V(\mathrm{M}_p^{\bar{\tau}}L)^{(n-1)/n}\liminf_{\tau\rightarrow\bar{\tau}}\frac{V(\mathrm{M}_p^{\tau}L)^{1/n}-V(\mathrm{M}_p^{\bar{\tau}}L)^{1/n}}{\tau-\bar{\tau}}
\]
and
\[g(\bar{\tau})\geq V(\mathrm{M}_p^{\bar{\tau}}L)^{(n-1)/n}\limsup_{\tau\rightarrow\bar{\tau}}\frac{V(\mathrm{M}_p^{\tau}L)^{1/n}-V(\mathrm{M}_p^{\bar{\tau}}L)^{1/n}}{\tau-\bar{\tau}}.
\]
Thus, we obtain
\[g(\bar{\tau})=V(\mathrm{M}_p^{\bar{\tau}}L)^{(n-1)/n}\lim_{\tau\rightarrow\bar{\tau}}\frac{V(\mathrm{M}_p^{\tau}L)^{1/n}-V(\mathrm{M}_p^{\bar{\tau}}L)^{1/n}}{\tau-\bar{\tau}}.\]
In particular, the function $\tau\rightarrow
V(\mathrm{M}_p^{\tau}L)^{1/n}$ is differentiable at $\bar{\tau}$.
The definition of $g(\bar{\tau})$ yields
\[\frac{\partial}{\partial \tau}V(\mathrm{M}_p^{\tau}L)=\int_{S^{n-1}}\frac{\partial}{\partial
\tau}h(\mathrm{M}_p^{\tau}L,u)\,dS(\mathrm{M}_p^{\tau}L,u).\]
Using (\ref{mtaupm}), we obtain for this derivative
\[-f(\tau)\int_{S^{n-1}}h(\mathrm{M}_p^{\tau}L,u)^{1-p}\left(h(\mathrm{M}_p^{+}L,u)^{p}-h(\mathrm{M}_p^{-}L,u)^{p}\right)dS(\mathrm{M}_p^{\tau}L,u),\]
where $f(\tau)$ is given by (\ref{ftau}).

The continuous function $\tau \mapsto V(\mathrm{M}_p^{\tau}L)$
must attain a maximum on $[-1,1]$. By the first part of the proof,
the points where this maximum is attained, are contained in
$(-1,1)$. If $\bar{\tau}$ is such a point, then
\[\left . \frac{\partial}{\partial \tau}V(\mathrm{M}_p^{\tau}L) \right |_{\tau=\bar{\tau}}=0.  \]
By the calculation above and definition (\ref{defvp}), this is
equivalent to
\begin{equation} \label{diff1}
V_p(\mathrm{M}_p^{\bar{\tau}}L,\mathrm{M}_p^{+}L)=V_p(\mathrm{M}_p^{\bar{\tau}}L,\mathrm{M}_p^{-}L).
\end{equation}
Since, for $Q, K, L \in \mathcal{K}^n_{\mathrm{o}}$ and $\alpha,
\beta > 0$,
\[{V}_p(Q,\alpha \cdot K +_p \beta \cdot L)=\alpha V_p(Q,K)+\beta V_p(Q,L),   \]
the representation (\ref{mtaupm}) and the identity (\ref{diff1})
imply
\[V_{p}(\mathrm{M}_p^{\bar{\tau}}L,\mathrm{M}_p^{\bar{\tau}}L)=V_{p}(\mathrm{M}_p^{\bar{\tau}}L,\mathrm{M}_p^{\bar{\tau}}(-L)).\]
By (\ref{vol1}) and since
$\mathrm{M}_p^{\bar{\tau}}(-L)=-\mathrm{M}_p^{\bar{\tau}}L$, we
therefore obtain
\[V(\mathrm{M}_p^{\bar{\tau}}L)=V_{p}(\mathrm{M}_p^{\bar{\tau}}L,-\mathrm{M}_p^{\bar{\tau}}L).\]
Using the $L_p$ Minkowski inequality (\ref{minkin}), we conclude
that $\mathrm{M}_p^{\bar{\tau}}L$ is origin-symmetric. By
(\ref{mtaupm}), this is equivalent to
\[\left((1+\bar{\tau})^p-(1-\bar{\tau})^p\right)\left(h(\mathrm{M}_p^{+}L,u)^{p}-h(\mathrm{M}_p^{-}L,u)^{p}\right)=0\]
for every $u\in S^{n-1}$. By Lemma \ref{inject},
$\mathrm{M}_p^{+}L \neq \mathrm{M}_p^{-}L$. Thus, we must have
$\bar{\tau}=0$ which proves the left inequality.
\end{proof}

\vspace{0.5cm}

\noindent {\bf Acknowledgement.} This work was supported by the
Austrian Science Fund (FWF), within the project P\,18308,
``Valuations on convex bodies''.


\providecommand{\bysame}{\leavevmode\hbox to3em{\hrulefill}\thinspace}
\providecommand{\MR}{\relax\ifhmode\unskip\space\fi MR }
\providecommand{\MRhref}[2]{%
  \href{http://www.ams.org/mathscinet-getitem?mr=#1}{#2}
}
\providecommand{\href}[2]{#2}

\end{document}